\def\n{\nabla}
\def\F{{\mathcal F}}
\def\I{{\mathcal I}}
\def\X#1{\mathfrak{X}(#1)}
\newenvironment{proof}{{\bf Proof.}}{\hfill$\rule{1ex}{1ex}$\par\medskip}
\newtheorem{theorem}{Theorem}[section]
\newtheorem{proposition}[theorem]{Proposition}
\newtheorem{remark}[theorem]{Remark}
\newtheorem{definition}[theorem]{Definition}
\newtheorem{lemma}[theorem]{Lemma}
\newtheorem{corollary}[theorem]{Corollary}
\newcommand{\bp}{\begin{proof}\;}
\newcommand{\ep}{\end{proof}}
\title{Finsler spaces with infinite dimensional holonomy group}
\author{Zolt\'an Muzsnay and P\'eter T. Nagy} \date{{\small Institute of
    Mathematics, University of Debrecen
    \\
    H-4010 Debrecen, Hungary, P.O.B. 12 \vspace{2pt}
    \\
    {\it E-mail}: {\tt {}muzsnay@math.unideb.hu}, {\tt
      {}nagypeti@math.unideb.hu}} \vspace{12pt}
  \\
  \emph{\small Dedicated to Professor Joseph Grifone on the occasion of
    his 70th birthday}}
\begin{document}

\maketitle

\footnotetext{2000 {\em Mathematics Subject Classification: 53B40,
    53C29, 22E65}.}  %
\footnotetext{{\em Key words and phrases:} Finsler geometry, holonomy,
  infinite-dimensional Lie groups.}%
\footnotetext{This research was supported by the Hungarian Scientific
  Research Fund (OTKA), Grant K 67617.}

\begin{abstract} 
  Our paper is devoted to the study of the holonomy groups of Finsler
  surfaces using the methods of infinite dimensional Lie theory. The
  notion of infinitesimal holonomy algebra will be introduced, by the
  smallest Lie algebra of vector fields on an indicatrix, containing the
  curvature vector fields and their horizontal covariant derivatives
  with respect to the Berwald connection. We obtain that the topological
  closure of the holonomy group contains the exponential image of any
  tangent Lie algebra of the holonomy group. A class of Randers surfaces
  is determined, for which the infinitesimal holonomy algebra coincides
  with the curvature algebra. We prove that for all projectively flat
  Randers surfaces of non-zero constant flag curvature the infinitesimal
  holonomy algebra has infinite dimension and hence the holonomy group
  cannot be a Lie group of finite dimension.  Finally, in the case of
  the Funk metric we prove that the infinitesimal holonomy algebra is a
  dense subalgebra of the Lie algebra of the full diffeomorphism group
  and hence the topological closure of the holonomy group is the
  orientation preserving diffeomorphism group of the circle.
\end{abstract}

\section{Introduction}

The holonomy group of a Finsler manifold is the subgroup of the
diffeomorphism group of an indicatrix, generated by canonical
homogeneous (nonlinear) parallel translations along closed loops. We
showed in a previous paper \cite{Mu_Na} that the holonomy group of a
Finsler manifold of non-zero constant flag curvature cannot be a
compact Lie group, and in general, the study its holonomy theory needs
the technique of infinite dimensional Lie groups. This paper is
devoted to the investigation of holonomy groups of Finsler manifolds
using the results of Omori's infinite dimensional Lie theory
\cite{Omori1, Omori2}.
\\
We prove that the topological closure of the holonomy group is
contained the exponential image of any tangent Lie algebra of the
holonomy group.  After the introduction of the notion of
\emph{infinitesimal holonomy algebra} as the smallest Lie algebra of
vector fields on an indicatrix, containing the curvature vector fields
and their horizontal covariant derivatives with respect to the Berwald
connection, we give a direct proof of the tangent property of this Lie
algebra, which is a result of M.~Crampin, D. J.~Saunders \cite{Cr_Sa}
obtained in the course of the discussion of what would be the natural
notion of holonomy algebra. We apply our results on the infinitesimal
holonomy algebra to the study of holonomy properties of
two-dimensional non-Riemannian Finsler manifolds of non-zero constant
flag curvature. We find a class of Randers surfaces for which the
infinitesimal holonomy algebra coincides with the curvature
algebra. We prove that for all projectively flat Randers surfaces of
non-zero constant flag curvature the infinitesimal holonomy algebra
has infinite dimension and hence the holonomy group cannot be a Lie
group of finite dimension.  Finally, we prove that in the case of Funk
metric the infinitesimal holonomy algebra is a dense subalgebra of the
Lie algebra of the full diffeomorphism group of the indicatrix,
containing the real Witt algebra of trigonometric polynomial vector
fields.  Hence the topological closure of this holonomy group is the
orientation preserving diffeomorphism group of the circle.

\section{Preliminaries}

Throughout this article, $M$ denotes a $C^\infty$ manifold, ${\mathfrak
  X}^{\infty}(M)$ denotes the vector space of smooth vector fields on
$M$ and ${\mathsf {Diff}}^\infty(M)$ denotes the group of all
$C^\infty$-diffeomorphism of $M$ with the $C^\infty$-topology.

If $M$ is a compact manifold then ${\mathsf {Diff}}^\infty(M)$ is a
$F$-regular infinite dimensional Lie group modeled on the vector space
${\mathfrak X}^{\infty}(M)$.  Particularly ${\mathsf {Diff}}^\infty(M)$
is a strong ILB-Lie group.  In this category of group one can define the
exponential mapping and the group structure is locally determined by the
Lie algebra by the exponential mapping.  The Lie algebra of
${\mathsf{Diff}^{\infty}}(M)$ is ${\mathfrak X}^{\infty}(M)$ equipped
with the negative of the usual Lie bracket (cf.~\cite{Omori1, Omori2}).

\subsubsection*{Finsler manifold, canonical connection, parallelism}

A \emph{Finsler manifold} is a pair $(M,\mathcal F)$, where $M$ is an
$n$-dimensional smooth manifold and $\F\colon TM \to \mathbb{R}$ is a continuous 
function, smooth on $\hat T M := TM\setminus\! \{0\}$, its 
restriction ${\mathcal F}_x={\mathcal F}|_{_{T_xM}}$  is a 
positively homogeneous function of degree $1$ and the symmetric bilinear form
\begin{displaymath}
  g_{x,y} \colon (u,v)\ \mapsto \ g_{ij}(x, y)u^iv^j=\frac{1}{2}
  \frac{\partial^2 \mathcal F^2_x(y+su+tv)}{\partial s\,\partial
    t}\Big|_{t=s=0}
\end{displaymath}
is positive definite at every $y\in \hat T_xM$.  \\[1ex]
\emph{Geodesics} of Finsler manifolds are determined by a
system of $2$nd order ordinary differential equation:
\begin{displaymath}
  \ddot{x}^i + 2 G^i(x,\dot x)=0, \quad i = 1,...,n
\end{displaymath}
where $G^i(x,\dot x)$ are locally given by
\begin{equation}
  \label{eq:G_i}  G^i(x,y):= \frac{1}{4}g^{il}(x,y)\Big(2\frac{\partial
    g_{jl}}{\partial x^k}(x,y) -\frac{\partial g_{jk}}{\partial
    x^l}(x,y) \Big) y^jy^k.
\end{equation}
The associated \emph{homogeneous (nonlinear) parallel translation} can be
defined as follows: a vector field $X(t)=X^i(t)\frac{\partial}{\partial
  x^i}$ along a curve $c(t)$ is said to be parallel if it satisfies
\begin{equation}
  \label{eq:D}
  D_{\dot c} X (t):=\Big(\frac{d X^i(t)}{d t}+  G^i_j(c(t),X(t))\dot c^j(t)
  \Big)\frac{\partial}{\partial x^i}
  =0,
\end{equation}
where $G^i_j=\frac{\partial G^i}{\partial y^j}$.

\subsubsection*{Horizontal distribution, Berwald connection, curvature}

Let $(TM,\pi ,M)$ and $(TTM,\tau ,TM)$ denote the first and the second
tangent bundle of the manifold $M$, respectively.  The horizontal
distribution ${\mathcal H}TM \!  \subset\! TTM$ associated to the
Finsler manifold $(M, \mathcal F)$ can be defined as the image of the
horizontal lift which is at each $x\in M$ an isomorphism
\begin{math}
  X \to X^h
\end{math}
between $T_xM$ and ${\mathcal H}_xTM$ defined by the formula
\begin{equation}
  \label{eq:lift}
  \Big(X^i\frac{\partial}{\partial x^i}\Big)^{\! h}:=
  X^i
  \left(
    \frac{\partial}{\partial x^i}
    -G_i^k(x,y)\frac{\partial}{\partial y^k}
  \right).
\end{equation}
If ${\mathcal V}TM:= \mathrm{Ker} \, \pi_{*} \!  \subset\!  TTM$
denotes the vertical distribution on $TM$, ${\mathcal V}_yTM:=
\mathrm{Ker} \, \pi_{*,y}$, then for any $y\in TM$ we have $T_yTM =
{\mathcal H}_yTM \oplus {\mathcal V}_yTM$. The projectors
corresponding to this decomposition will be denoted by $h:TTM \to
{\mathcal H}TM$ and $v:TTM \to {\mathcal V}TM$.  We note that the
vertical distribution is integrable.
\\[1ex]
Let $(\hat{\mathcal V}TM,\tau,\hat T M)$ be the vertical bundle over
$\hat T M := TM\setminus\! \{0\}$.  We denote by ${\mathfrak
  X}^{\infty}(M)$, respectively by $\hat{\mathfrak X}^{\infty}(TM)$
the vector space of smooth vector fields on $M$ and of smooth sections
of the bundle $(\hat{\mathcal V}TM,\tau,\hat T M)$, respectively. The
\emph{horizontal Berwald covariant derivative} of a section
$\xi\in\hat{\mathfrak X}^{\infty}(TM)$ by a vector field
$X\in{\mathfrak X}^{\infty}(M)$ is defined by
\begin{equation}
  \label{eq:berwald_h_v} \nabla_X\xi := [X^h,\xi].
\end{equation}
In an induced local coordinate system $(x^i,y^i)$ on $TM$ for the
vector fields $\xi(x,y) = \xi^i(x,y)\frac {\partial}{\partial y^i}$ and
$X(x) = X^i(x)\frac {\partial}{\partial x^i}$ we have (\ref{eq:lift})
and hence
\begin{equation}
  \label{covder}
  \nabla_X\xi = \left(\frac {\partial\xi^i(x,y)}{\partial x^j} 
    - G_j^k(x,y)\frac{\partial \xi^i(x,y)}{\partial y^k} + 
    G^i_{j k}(x,y)\xi^k(x,y)\right)X^j\frac {\partial}{\partial y^i},
\end{equation}
where \[G^i_{j k}(x,y) := \frac{\partial G_j^i(x,y)}{\partial y^k}.\]
Let $(\pi^{*}TM,\bar{\pi},\hat T M)$ be the pull-back bundle of $(\hat T
M,\pi ,M)$ by the map $\pi:TM\to M$.  Clearly, the mapping
\begin{equation}
 \label{iden}
 (x,y,\xi^i\frac {\partial}{\partial y^i})\mapsto(x,y,
 \xi^i\frac {\partial}{\partial x^i}):\;\hat{\mathcal V}TM
 \rightarrow \pi^{*}TM
\end{equation} 
is a canonical bundle isomorphism.  In the following we will use the
isomorphism (\ref{iden}) for the identification of these bundles. If
we define
\[\nabla_X  \phi= \left(\frac {\partial\phi}{\partial x^j} - 
  G_j^k(x,y)\frac{\partial \phi(x,y)}{\partial y^k}\right)X^j\] for a
smooth function $\phi:\hat TM\to \mathbb R$, the horizontal Berwald
covariant derivation (\ref{covder}) can be extended to the tensor bundle
over
$(\pi^{*}TM,\bar{\pi},\hat T M)$.\\[1ex]
The \emph{Riemannian curvature tensor} field characterizes the
integrability of the horizontal distribution: 
\begin{equation}
  \label{eq:R_1}
  R_{(x,y)}(X, Y):= v\big[X^h,Y^h\big], \qquad X, Y \in
  T_x M.
\end{equation}
If the horizontal distribution $\mathcal H TM$ is integrable, then the
Riemannian curvature is identically zero.  Using a local coordinate
system the expression of the Riemannian curvature tensor $R_{(x,y)} =
R^i_{jk}(x,y)dx^j\otimes dx^k\otimes\frac{\partial}{\partial x^i}$ on
the pull-back bundle $(\pi^{*}TM,\bar{\pi},\hat T M)$ is
\begin{displaymath}
  R^i_{jk}(x,y) =  \frac{\partial G^i_j(x,y)}{\partial x^k} -
  \frac{\partial G^i_k(x,y)}{\partial x^j} + G_j^m(x,y)
  G^i_{k m}(x,y) - G_k^m(x,y)
  G^i_{j m}(x,y). 
\end{displaymath} 
The manifold is called of constant flag curvature $\lambda\in{\mathbb R}$, if for any
$x\in M$ the local expression of the Riemannian curvature is
\begin{equation}\label{gorb}
  R^i_{jk}(x,y) = \lambda\left(\delta_k^ig_{jm}(x,y)y^m - \delta_j^ig_{km}(x,y)y^m
  \right).
\end{equation}
In this case the flag curvature of the Finsler manifold
(cf.~\cite{ChSh}, Section 2.1 pp. 43-46) does not depend either on the
point or on the 2-flag.\\[1ex]
The \emph{Berwald curvature tensor} field $B_{(x,y)} =
B^i_{jkl}(x,y)dx^j\otimes dx^k\otimes
dx^l\otimes\frac{\partial}{\partial x^i}$ is
\begin{equation}
  \label{berw}
  B^i_{jkl}(x,y) =  \frac{\partial G^i_{jk}(x,y)}{\partial y^l} 
  = \frac{\partial^3 G^i(x,y)}{\partial y^j\partial y^k\partial y^l}.
\end{equation}
The \emph{mean Berwald curvature tensor} field $E_{(x,y)} =
E_{jk}(x,y)dx^j\otimes dx^k$ is the trace
\begin{equation}
  \label{mberw}
  E_{jk}(x,y) = B^l_{jkl}(x,y) =
  \frac{\partial^3 G^l(x,y)}{\partial y^j\partial y^k\partial y^l}.
\end{equation}
The \emph{Landsberg curvature tensor} field $L_{(x,y)} =
L^i_{jkl}(x,y)dx^j\otimes dx^k\otimes
dx^l\otimes\frac{\partial}{\partial x^i}$ is
\[L_{(x,y)}(u,v,w)=g_{(x,y)}\left(\nabla_wB_{(x,y)}(u,v,w),y\right),
\quad u,v,w\in T_xM.\] According to Lemma 6.2.2, equation (6.30),
p. 85 in \cite {Shen1}, one has for $u,v,w\in T_xM$
\[\nabla_wg_{(x,y)}(u,v) = -2 L(u,v,w).\]
\begin{lemma} 
  \label{covcurv} 
  The horizontal Berwald covariant derivative of the tensor field
  \[Q_{(x,y)} = \left(\delta_j^ig_{km}(x,y)y^m -
    \delta_k^ig_{jm}(x,y)y^m\right) dx^j\otimes dx^k\otimes
  dx^l\otimes\frac{\partial}{\partial x^i}\] vanishes.
\end{lemma}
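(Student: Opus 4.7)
The strategy is to split $Q$ into two tensorial factors and show that each is $\nabla$-parallel. Since $Q$ is a sum of terms of the form $\pm\, \delta^i_{\bullet}\, g_{\bullet m} y^m$ and the Kronecker delta is parallel for any connection, the Leibniz rule reduces the problem to showing that the covector $\beta_k := g_{km}(x,y)\, y^m$ has vanishing horizontal Berwald covariant derivative.

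The first ingredient is the auxiliary identity $\nabla y = 0$, where $y = y^i \partial/\partial x^i$ denotes the tautological section of $\pi^* TM$ identified with $\hat{\mathcal V}TM$ through (\ref{iden}). Substituting $\xi^i = y^i$ into (\ref{covder}) gives $\nabla_X y = (G^i_{jk}\,y^k - G^i_j)X^j\, \partial/\partial y^i$; the $2$-homogeneity of $G^i$ makes $G^i_j$ one-homogeneous in $y$, so Euler's identity forces $y^k G^i_{jk} = G^i_j$, and the parenthesis vanishes. The second ingredient is the Landsberg identity $\nabla_w g(u,v) = -2 L(u,v,w)$ recalled just before the lemma. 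Combining them through the Leibniz rule gives $\nabla_X \beta_k = -2\, L(e_k, y, X)$, reducing the task to showing $L(u, y, w) = 0$ for all $u, w$.

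This vanishing rests on a further homogeneity argument. The second derivative $\partial^2 G^i/\partial y^j \partial y^l$ is $0$-homogeneous in $y$, so Euler applied to (\ref{berw}) yields $y^k B^i_{jkl} = 0$; in other words, the Berwald curvature $B(u, v, w)$ is annihilated as soon as one of its first three arguments is $y$. From the definition $L(u,v,w) = g(\nabla_w B(u,v,w), y)$ we then get $L(u, y, w) = 0$, hence $\nabla \beta = 0$, and $\nabla Q = 0$ follows by a final application of Leibniz to $Q = \delta^i_j \beta_k - \delta^i_k \beta_j$.

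The only real obstacle is bookkeeping: one must track the identification (\ref{iden}) when passing between vertical vectors and pull-back vectors, apply Euler's relation at the correct derivative order of $G^i$, and maintain the right argument slot of $L$ when the Landsberg identity is invoked. There is no genuine analytic difficulty once these are handled.
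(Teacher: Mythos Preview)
Your argument is correct and follows the same path as the paper's proof: both reduce the claim to $\nabla y = 0$, the parallelism of the identity endomorphism, and the vanishing of the Landsberg tensor when one argument equals $y$; the paper simply cites \cite{Shen1} for this last identity while you derive it via Euler homogeneity of $B$. The only step left slightly implicit is that $y^k B^i_{jkl}=0$ must be upgraded to $(\nabla_w B)(u,y,w)=0$ before you can read off $L(u,y,w)=0$ from the definition---one more use of Leibniz together with $\nabla y = 0$ does this---but that is exactly the bookkeeping you already flagged.
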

\begin{proof}
  For any vector field $W\in{\mathfrak X}^{\infty}(M)$ we have $\nabla_W
  y = 0$ and $\nabla_W \mathsf{Id}_{TM} = 0$. Moreover, since
  $L_{(x,y)}(y,v,w)= 0$ (cf. equation 6.28, p. 85 in \cite {Shen1}) we
  get the assertion.
\end{proof}

\section{Holonomy of Finsler spaces}

Let $(M,\mathcal F)$ be an $n$-dimensional Finsler manifold.  We
denote by $(\I M,\pi,M)$ the \emph {indicatrix bundle} of $(M,\mathcal
F)$, the \emph{indicatrix} $\I_xM$ at $x \in M$ is the compact
hypersurface 
\begin{displaymath}
  \I_xM:= \{y \in T_xM ; \ \mathcal F(x, y) = 1\},
\end{displaymath}
of $T_xM$ diffeomorphic to the standard $(n-1)$-sphere.
\\[1ex]
The homogeneous (nonlinear) parallel translation $\tau_{c}:T_{c(0)}M\to
T_{c(1)}M$ along a curve $c:[0,1]\to M$, defined by (\ref{eq:D})
preserves the value of the Finsler function, hence it induces a map
\begin{equation}
  \label{eq:tau_ind}
  \tau^\I_{c}\colon \I_{c(0)}M \longrightarrow \I_{c(1)}M
\end{equation}
between the indicatrices. 
\\[1ex]
The notion of the holonomy group of a Riemannian manifold can be
generalized very naturally for a Finsler manifold:
\begin{definition}
  \emph{The} holonomy group $\mathsf{Hol}(x)$ \emph{of a Finsler space
    $(M,\F)$ at a point $x\in M$ is the subgroup of the group of
    diffeomorphisms ${\mathsf{Diff}^{\infty}}({\I}_xM)$ of the
    indicatrix ${\I}_xM$ generated by (nonlinear) parallel translations
    of ${\I}_xM$ along piece-wise differentiable closed curves initiated
    at the point $x\!\in \!M$.}
\end{definition}
Clearly, the holonomy groups at different points of $M$ are
isomorphic.
\begin{definition}
  \emph{A vector field $\xi\in {\mathfrak X}^{\infty}({\I}M)$ on the
    indicatrix bundle $\I M$ is a \emph{curvature vector field} of the
    Finsler manifold $(M, \F)$, if there exist vector fields $X, Y\in
    {\mathfrak X}^{\infty}(M)$ on the manifold $M$ such that $\xi =
    r(X,Y)$, where for every $x\in M$ and $y \in {\I}_xM$ we have
    \begin{equation} \label{eq:r}
      r(X,Y)(x,y):=R_{(x,y)}(X_x,Y_x).
    \end{equation}
    If $x\in M$ is fixed and $X, Y\in T_xM$, then the vector field $y
    \to r(X,Y)(x,y)$ on ${\I}_x M$ is a \emph{curvature vector field
      at $x$} (see \cite{Mu_Na}).\\[1ex]
    The Lie algebra $\mathfrak{R}(M)$ of vector fields generated by the
    curvature vector fields of $(M, \F)$ is called the \emph{curvature
      algebra} of the Finsler manifold $(M, \F)$. For a fixed $x\in M$
    the Lie algebra $\mathfrak{R}_x$ of vector fields generated by the
    curvature vector fields at $x$ is called the \emph{curvature
      algebra at the point $x$}.}
\end{definition}
\begin{definition}
  \emph{The \emph{infinitesimal holonomy algebra} of the Finsler
    manifold $(M, \mathcal F)$ is the smallest Lie algebra
    $\mathfrak{hol}^{*}(M)$ of vector fields on the indicatrix bundle $\I
    M$ satisfying the properties
    \begin{enumerate}
    \item[(i)] any curvature vector field $\xi$ belongs to
      $\mathfrak{hol}^{*}(M)$,
    \item[(ii)] if $\xi, \eta\in \mathfrak{hol}^{*}(M)$ then 
      $[\xi,\eta]\in\mathfrak{hol}^{*}(M)$,  
    \item[(iii)] if $\xi\in\mathfrak{hol}^{*}(M)$ and $X\!\in\!
      {\mathfrak X}^{\infty}(M)$ then the horizontal Berwald covariant
      derivative $\nabla_{\!\! X}\xi$ also belongs to
      $\mathfrak{hol}^{*}(M)$.
    \end{enumerate} 
    The \emph{infinitesimal holonomy algebra at a point $x\in M$} is
    the Lie algebra
    \begin{displaymath}
      \mathfrak{hol}^{*}(x) \!  := \! \big\{ \, \xi(x) \
      ; \ \xi \in \mathfrak{hol}^{*}(M) \, \big \}\subset{\mathfrak
        X}^{\infty}({\I}_xM)
    \end{displaymath}
    of vector fields on the indicatrix $\I_x M$.} 
\end{definition} 
Clearly, $\mathfrak{R}(M)\subset\mathfrak{hol}^{*}(M)$ and
$\mathfrak{R}_x\subset\mathfrak{hol}^{*}(x)$ for any $x\!\in\! M$.

\section{Tangent Lie algebras to the holonomy group}

Before formulating the next theorem, we recall the notion of tangent
vector fields to a subgroup of the diffeomorphism group, introduced and
discussed in \cite{Mu_Na}. Let $H$ be a subgroup of the diffeomorphism
group $\mathsf{Diff}^{\infty}(M)$ of a differentiable manifold $M$ and
let
${\mathfrak X}^{\infty}(M)$ be the Lie algebra of smooth vector fields on $M$.
\begin{definition}
  \label{def:strongly}
  \emph{ A vector field $X\!\in\! {\mathfrak X}^{\infty}(M)$ is called
    \emph{tangent to} $H\subset\mathsf{Diff}^{\infty}(M)$ if there
    exists a ${\mathcal C}^1$-differentiable $1$-parameter family
    $\{\Phi(t)\!\in \!H\}_{t\in \mathbb R}$ of diffeomorphisms of $M$
    such that $\Phi(0)=\mathsf{Id}$ and \(
    \frac{\partial\Phi(t)}{\partial t}\big|_{t=0}\!=\!X.\)
    \\[1ex]
    A Lie subalgebra $\mathfrak g$ of ${\mathfrak X}^{\infty}(M)$ is
    called \emph{tangent to} $H$, if all elements of $\mathfrak g$ are
    tangent vector fields to $H$.}
\end{definition}
\begin{proposition} 
  \label{expo} 
  If the Lie subalgebra $\mathfrak g$ of ${\mathfrak X}^{\infty}(M)$
  is tangent to a closed subgroup $H$ of
  $\mathsf{Diff}^{\infty}(M)$, then the exponential image
  $\exp(\mathfrak g)$ of $\mathfrak g$ is contained in $H$.
\end{proposition}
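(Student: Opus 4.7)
The plan is to reduce the statement to a single vector field and then invoke a Trotter-type approximation. Since $\exp(\mathfrak g)=\{\exp(X):X\in\mathfrak g\}$, it is enough to fix an arbitrary $X\in\mathfrak g$ and prove $\exp(X)\in H$; in fact I would prove the slightly stronger fact that the entire one-parameter subgroup $\{\exp(tX)\}_{t\in\mathbb R}$ lies in $H$, which automatically gives $\exp(\mathfrak g)\subset H$.

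So fix $X\in\mathfrak g$. By tangency and Definition \ref{def:strongly} there is a $C^1$ family $\Phi\colon\mathbb R\to H$ with $\Phi(0)=\mathsf{Id}$ and $\Phi'(0)=X$. Both $\Phi$ and $t\mapsto\exp(tX)$ are $C^1$ curves through the identity of the strong ILB-Lie group $\mathsf{Diff}^\infty(M)$ sharing the initial velocity $X$, but they need not coincide. The decisive step is the Trotter-type formula
\[
\exp(tX)=\lim_{n\to\infty}\Phi(t/n)^{n},
\]
valid in Omori's ILB-Lie framework for every $t\in\mathbb R$, with convergence in the $C^\infty$-topology uniformly on compact $t$-intervals. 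Since $\Phi(t/n)\in H$ and $H$ is a subgroup, each iterate $\Phi(t/n)^{n}$ belongs to $H$; because $H$ is closed in $\mathsf{Diff}^\infty(M)$, the limit $\exp(tX)$ lies in $H$ as well. Applying this to every $X\in\mathfrak g$ yields $\exp(\mathfrak g)\subset H$.

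The main obstacle is the Trotter product formula itself: in a finite-dimensional Lie group it is completely standard, but here it must be justified inside the infinite-dimensional strong ILB-Lie group $\mathsf{Diff}^\infty(M)$ with the sign convention on the bracket noted in the preliminaries. The required regularity of the exponential map and the convergence of the iterated products $\Phi(t/n)^n$ rest on Omori's results \cite{Omori1, Omori2}; once that approximation is in place, the rest of the argument (reduction to one-parameter subgroups, subgroup property of $H$, closedness of $H$) is straightforward.
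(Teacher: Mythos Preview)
Your proposal is correct and follows essentially the same route as the paper: reduce to a single $X\in\mathfrak g$, take a $C^1$ family $\Phi$ in $H$ with $\Phi(0)=\mathsf{Id}$ and $\Phi'(0)=X$, invoke Omori's result that $\Phi(t/n)^n\to\exp(tX)$ uniformly on compact intervals, and conclude via closedness of $H$. The paper cites the specific location in \cite{Omori2} (Corollary 5.4, p.~84, together with Theorem~2.1 in Ch.~VI) for the Trotter-type convergence, which is exactly the ``main obstacle'' you identified.
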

\begin{proof} The diffeomorphism group $\mathsf{Diff}^{\infty}(M)$ is a
  strong $ILB$-Lie group (\cite{Omori2}, Theorem 2.1 in Ch. VI, p. 137)
  and hence it is a regular $F$-Lie group (\cite{Omori2}, Corollary 5.4,
  p. 84). According to the arguments used in the proof of this
  corollary, if $\{\Phi(s)\in H\}_{s\in\mathbb R}$ is a ${\mathcal
    C}^1$-differentiable $1$-parameter family of diffeomorphisms of $M$
  such that
  \begin{displaymath}
    \Phi(0)=\mathsf{Id} \quad \text{and} \quad 
    \frac{\partial\Phi(s)}{\partial s}\Big|_{s=0}=X,
  \end{displaymath}
  then the $1$-parameter families
  \begin{displaymath}
    \left\{\Phi\left(\frac{s}{n}\right)^n\in H\right\}_{s\in
      \mathbb R}, \quad n=1,2\dots
  \end{displaymath}
  of diffeomorphisms converge uniformly on each compact interval to the
  $1$-parameter group $\left\{\exp{sX}\right\}_{s\in \mathbb R}$, where
  $X\!\in\!\mathfrak g$. It follows that $\left\{\exp{sX}\right\}_{s\in
    \mathbb R}\!\subset\! H$ for any $X\in\mathfrak g$.
\end{proof}
\begin{theorem} 
  \label{thm:inf_hol_tan}
  The infinitesimal holonomy algebra $\mathfrak{hol}^{*}(x)$ at a point
  $x\!\in \!M$ has the following properties:
  \begin{enumerate}
  \item[{\em (i)}] $\mathfrak{hol}^{*}(x)$ is tangent to the holonomy
    group $\mathsf{Hol}(x)$,
  \item[{\em (ii)}] the group generated by the exponential image
    $\exp\!\big(\mathfrak{hol}^{*}(x)\big)$ is a subgroup of the
    topological closure of the holonomy group $\mathsf{Hol}(x)$.
  \end{enumerate}
\end{theorem}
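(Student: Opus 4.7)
The plan is to prove part (i) by induction on the construction of $\mathfrak{hol}^{*}(x)$: verify that each generator is tangent to $\mathsf{Hol}(x)$ and that the set of tangent vector fields is closed under the three operations (i)--(iii) of the definition. Part (ii) then follows immediately: tangency to $\mathsf{Hol}(x)$ implies tangency to the closed subgroup $\overline{\mathsf{Hol}(x)}$ of $\mathsf{Diff}^{\infty}(\I_xM)$, and Proposition \ref{expo} yields $\exp(\mathfrak{hol}^{*}(x))\subset\overline{\mathsf{Hol}(x)}$; the generated subgroup inherits this containment.

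For the curvature vector fields, the idea is the classical Ambrose--Singer construction adapted to the nonlinear setting, essentially carried out already in \cite{Mu_Na}. Given $X,Y\in T_xM$, build a two-parameter family of piecewise smooth parallelogram loops $\gamma_{s,t}$ at $x$ with infinitesimal sides proportional to $sX$ and $tY$. Expanding the induced indicatrix diffeomorphism $\Phi(s,t)=\tau^{\I}_{\gamma_{s,t}}$ using (\ref{eq:R_1}), one obtains $\Phi(s,t)=\mathsf{Id}+st\,r(X,Y)+o(st)$. Reparametrising by $u\mapsto\Phi(\sqrt{|u|}\,\mathrm{sgn}(u),\sqrt{|u|})$ yields a $\mathcal C^1$ family in $\mathsf{Hol}(x)$ whose derivative at $u=0$ equals the curvature vector field $r(X,Y)$ at $x$, so condition (i) of the definition is verified.

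Closure under Lie brackets is handled by the standard commutator trick: if $\xi,\eta$ are tangent via families $\Phi_\xi(s),\Phi_\eta(s)\in\mathsf{Hol}(x)$, then
\[
\Psi(u):=\Phi_\xi(\sqrt{|u|})\,\Phi_\eta(\sqrt{|u|})\,\Phi_\xi(\sqrt{|u|})^{-1}\,\Phi_\eta(\sqrt{|u|})^{-1}
\]
(with appropriate sign for $u<0$) lies in $\mathsf{Hol}(x)$ and is $\mathcal C^1$ with $\partial_u\Psi|_{u=0}=[\xi,\eta]$. Closure under the horizontal Berwald derivative uses the commutator definition $\nabla_X\xi=[X^h,\xi]$. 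Let $c$ be an integral curve of $X$ through $x$ and $\tau_s\colon\I_xM\to\I_{c(s)}M$ the parallel translation along $c|_{[0,s]}$. Assuming $\xi$ extends to a vector field on $\I M$ tangent to $\mathsf{Hol}$ pointwise, transport each loop at $c(s)$ realising the tangency of $\xi(c(s))$ back to $x$ by conjugation with $\tau_s$; the composed loop is based at $x$, hence lies in $\mathsf{Hol}(x)$. Differentiating the resulting family $\tau_s^{-1}\Phi_{c(s)}(t)\tau_s$ in both $s$ and $t$ and reparametrising produces a $\mathcal C^1$ family in $\mathsf{Hol}(x)$ whose derivative at $0$ is $[X^h,\xi](x)=\nabla_X\xi(x)$ by (\ref{eq:berwald_h_v}) and (\ref{covder}).

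The main obstacle is this last step. Two subtleties must be reconciled: one must ensure that the derived loops indeed stay based at $x$ (so that their parallel translations lie in $\mathsf{Hol}(x)$, not merely in the holonomy groupoid), and one must verify the $\mathcal C^1$ regularity of the one-parameter family obtained after combining the $s$- and $t$-dependencies and the $\sqrt{\cdot}$ reparametrisation, exploiting the $F$-regular ILB structure of $\mathsf{Diff}^{\infty}(\I_xM)$ recalled in \cite{Omori2}. Once this is done, every generator of $\mathfrak{hol}^{*}(x)$ is tangent to $\mathsf{Hol}(x)$, which proves (i); and (ii) follows from Proposition \ref{expo} applied to $\overline{\mathsf{Hol}(x)}$.
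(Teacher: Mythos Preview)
Your proposal is correct and follows essentially the same route as the paper: parallelogram loops for curvature vector fields, group commutators for Lie brackets, conjugation by parallel translation along integral curves of $X$ for the horizontal Berwald derivative, and then Proposition~\ref{expo} applied to $\overline{\mathsf{Hol}(x)}$ for part~(ii).

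The one organisational difference worth noting is that the paper does not work directly with one-parameter $\mathcal C^1$ families and repeated $\sqrt{\cdot}$ reparametrisations. Instead it introduces the auxiliary notion of a vector field \emph{strongly tangent} to $H$: one admitting a smooth $k$-parameter family $\Phi_{(t_1,\dots,t_k)}\in H$ which equals $\mathsf{Id}$ whenever any $t_j=0$ and whose mixed $k$th partial at the origin is $\xi$. With this device, each of your three steps becomes purely algebraic---commutators of a $k$-parameter and an $l$-parameter family give a $(k+l)$-parameter family satisfying the same vanishing conditions---and the delicate passage from a multi-parameter smooth family to a one-parameter $\mathcal C^1$ family (your ``main obstacle'') is handled once, in \cite{Mu_Na}. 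The paper also packages your pointwise extension issue by working with the \emph{fibred} holonomy group $\mathsf{Hol_f}(U)$ over a chart $U\cong\mathbb R^n$, so that loops are automatically based correctly at every point simultaneously; restriction to $\I_xM$ at the end gives tangency to $\mathsf{Hol}(x)$. Your inline treatment is equivalent in content, but the paper's abstraction is what cleanly discharges the two subtleties you flag.
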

To prove the theorem we have to introduce some notion. We say that a
vector field $\xi\!\in\! {\mathfrak X}^{\infty}(M)$ is \emph{strongly
  tangent} to $H\!\subset\!\mathsf{Diff}^{\infty}(M)$ if there exist $k
\!\in\!\mathbb N$ and a ${\mathcal C}^{\infty}$-differentiable
$k$-parameter family $\{\Phi_{(t_1,\dots,t_k)}\in H\}_{t_i\in
  (-\varepsilon,\varepsilon)}$ of diffeomorphisms associated to $\xi$
such that
\begin{enumerate}
\item \label{stronly_tan_1} $\Phi_{(t_1,\dots,t_k)}=\mathsf{Id}$, if
  $t_j=0$ for some $1\leq j\leq k;$
\item \label{stronly_tan_2}
  $\frac{\partial^k\Phi_{(t_1,\dots,t_k)}}{\partial t_1\cdots\partial
    t_k}\big|_{(t_1,\dots,t_k)=(0,\dots,0)}=\xi$.
\end{enumerate}
In \cite{Mu_Na} it was proved that if $\xi, \eta \!\in\!  {\mathfrak
  X}^{\infty}(M)$ are strongly tangent vector field to the group $H$,
then $[\xi,\eta]\in{\mathfrak X}^{\infty}(M)$ is also strongly tangent
to $H$. Moreover, if $\mathcal V$ is a set of strongly tangent
vector fields to the group $H$, then the Lie subalgebra $\mathfrak v$ of
${\mathfrak X}^{\infty}(M)$ generated by $\mathcal V$ is tangent to $H$.
\\
Now let $U$ be an open neighbourhood in the manifold $M$ and let
$\mathsf{Hol}_U(x)$ denote the holonomy group at $x\in U$ of the Finsler
submanifold $(U,\F|_U)$.  The group $\mathsf{Hol_f}(U)$ of fibre
preserving diffeomorphisms of the indicatrix bundle $(\I U,\pi|_U,U)$,
inducing elements of the holonomy group $\mathsf{Hol}_U(x)$ on any
indicatrix $\I_x$ will be called the \emph{fibred holonomy group} of the
submanifold $(U,\F|_U)$.
\\
It is clear that any strongly tangent vector field $\xi \!\in\!
{\mathfrak X}^{\infty}(\I U)$ to the fibred holonomy group
$\mathsf{Hol_f}(U)$ is a vertical vector field and for any $x\in U$ its
restriction $\xi_x:= \xi\big|_{\I_x}$ to the indicatrix $\I_x$ is
strongly tangent to the holonomy group $\mathsf{Hol}_U(x)$.
\begin{lemma} 
  \label{lemma_curv}
  If $U\!\subset\! M$ is diffeomorphic to $\mathbb{R}^n$, then any
  curvature vector field on $U$ is strongly tangent to the fibred
  holonomy group $\mathsf{Hol_f}(U)$.
\end{lemma}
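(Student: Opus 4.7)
The plan is to realize a given curvature vector field $\xi=r(X,Y)$ as the mixed second derivative at the origin of an explicit two-parameter family $\{\Phi_{(s,t)}\}\subset\mathsf{Hol_f}(U)$, which verifies strong tangency with $k=2$. The construction is the nonlinear analogue of the classical principle that curvature is the holonomy of an infinitesimal parallelogram.

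Fix a diffeomorphism $U\cong\mathbb{R}^n$ and let $\phi^X_s$, $\phi^Y_t$ denote the local flows of $X$ and $Y$ on $U$. For each $x\in U$ and sufficiently small $s,t$ set
\[
p_0=x,\quad p_1=\phi^X_s(p_0),\quad p_2=\phi^Y_t(p_1),\quad p_3=\phi^X_{-s}(p_2),\quad p_4=\phi^Y_{-t}(p_3),
\]
and close the broken parallelogram by the straight segment $\sigma^x_{s,t}\colon[0,1]\to U$ (in the $\mathbb{R}^n$ chart) from $p_4$ to $p_0=x$. Since $p_4-p_0=O(st)$ and $p_4=p_0$ whenever $s=0$ or $t=0$, the concatenation $\gamma^x_{s,t}$ is a piece-wise differentiable loop based at $x$ that depends smoothly on $(x,s,t)$ and reduces to the constant loop on the coordinate axes.

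Define $\Phi_{(s,t)}\colon\I U\to\I U$ by $\Phi_{(s,t)}(x,y):=(x,\tau^\I_{\gamma^x_{s,t}}(y))$. Smooth dependence of the parallel-transport equation \eqref{eq:D} on parameters and initial data shows that $\Phi_{(s,t)}$ is a smooth fibre-preserving diffeomorphism of $\I U$, and its restriction to each indicatrix $\I_x$ is a nonlinear parallel translation along a closed loop based at $x$; hence $\Phi_{(s,t)}\in\mathsf{Hol_f}(U)$. Property (1) of strong tangency is immediate: for $s=0$ or $t=0$ the loop $\gamma^x_{s,t}$ retraces itself and its parallel translation is the identity.

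The remaining and central task is to evaluate
\[
\frac{\partial^2}{\partial s\,\partial t}\Phi_{(s,t)}\Big|_{(0,0)}=\xi.
\]
For this I would write $\Phi_{(s,t)}|_{\I_x}$ as the composition of the time-$s$ and time-$t$ flows of the horizontal lifts $X^h$, $Y^h$ (which generate nonlinear parallel transport) with $\tau^\I_{\sigma^x_{s,t}}$, and invoke the standard flow-commutator identity to obtain $[X^h,Y^h]$ for the mixed partial of the four horizontal flows. Since $\sigma^x_{s,t}$ has velocity $-st\,[X,Y]+o(st)$, its parallel transport contributes $-[X,Y]^h$ to the mixed partial. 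Using $[X^h,Y^h]=h[X^h,Y^h]+v[X^h,Y^h]$, together with the identity $h[X^h,Y^h]=[X,Y]^h$ (forced by $\pi_{*}[X^h,Y^h]=[X,Y]$) and the definition $v[X^h,Y^h]=R(X,Y)$ from \eqref{eq:R_1}, the horizontal contributions cancel and what remains is $R_{(x,\cdot)}(X,Y)=\xi|_{\I_x}$. The principal obstacle is exactly this final step: carefully justifying the asymptotics of the closing-segment contribution and the cancellation of the horizontal parts so that only the vertical curvature term survives.
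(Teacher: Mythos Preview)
Your argument is essentially correct, but it takes a more laborious route than the paper's. The paper exploits the identification $U\cong\mathbb{R}^n$ more directly: instead of using flow lines of $X$ and $Y$, it takes the \emph{affine} parallelogram $\Pi(sX_x,tY_x)$ with vertices $x$, $x+sX_x$, $x+sX_x+tY_x$, $x+tY_x$ joined by straight segments. This loop closes exactly for all $(s,t)$, so no auxiliary closing segment $\sigma^x_{s,t}$ is needed, and the paper simply invokes the classical identity
\[
\frac{\partial^2}{\partial s\,\partial t}\Big|_{(0,0)}\tau_{\Pi(sX_x,tY_x)}=R_{(x,\cdot)}(X_x,Y_x)
\]
without further ado. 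Your construction, by contrast, produces a broken flow-parallelogram that fails to close by an amount $p_4-x=st[X,Y]_x+o(st)$, forcing you to append $\sigma^x_{s,t}$ and then argue that the horizontal contribution $[X,Y]^h$ from the flow commutator $[X^h,Y^h]=[X,Y]^h+R(X,Y)$ is cancelled by the contribution of the closing segment. That cancellation does go through (your product-rule reasoning is sound once one notes that both $\Psi=\phi^{Y^h}_{-t}\phi^{X^h}_{-s}\phi^{Y^h}_{t}\phi^{X^h}_{s}$ and the closing transport have vanishing first partials at the origin, so the mixed partial of the composite is the sum of the mixed partials), but it is exactly the extra work the paper's choice of loops avoids.

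What your approach buys is a more transparent explanation of \emph{why} curvature appears---it is literally the vertical part of $[X^h,Y^h]$, cf.~\eqref{eq:R_1}---and it generalises immediately to situations where no affine structure on the base is available (one would close with any short path rather than a straight segment). What the paper's approach buys is brevity: a single genuinely closed family of loops and a one-line appeal to the standard curvature-as-infinitesimal-holonomy formula.
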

\bp Since $U$ is diffeomorphic to $\mathbb{R}^n$, we can identify $U$
with the vector space $\mathbb{R}^n$.  Let $\xi\!=\!  r(X,Y)\!\in
\!{\mathfrak X}^{\infty}({\I}\mathbb{R}^n)$ be a curvature vector field,
where $X, Y\!\in\! {\mathfrak X}^{\infty}(\mathbb{R}^n)$. We show that
there exists a family
$\big\{\Phi_{(s,t)}\big|_{{\I}\mathbb{R}^n}\big\}_{s,t\in
  (-\varepsilon,\varepsilon)}$ of fibre preserving diffeomorphisms of
the indicatrix bundle $(\I \mathbb{R}^n,\pi,\mathbb{R}^n)$ such that for
any $x\!\in\! \mathbb{R}^n$ the induced family of diffeomorphisms of the
indicatrix ${\I}_x$ is contained in $\mathsf{Hol}_U(x)$ and
$\xi_x\!=\!\xi\big|_{{\I}_x\mathbb{R}^n}$ is the corresponding strongly
tangent vector field to $\mathsf{Hol}_U(x)$.
\\
For any $x\in\mathbb{R}^n$ and $0\leq s, t\leq 1$ let $\Pi(s X_x,t Y_x)$
be the parallelogram in $\mathbb{R}^n$ determined by the vertices $x$,
$x \!+\! s X_x$, $x \!+\! s X_x \!+\! t Y_x$, $x \!+\! t Y_x\!\in\!
\mathbb{R}^n$ and let $\tau _{\,\Pi(s X_x,t Y_x)}\!:\I_x M\to\I_x M$
denote the (nonlinear) parallel translation of the indicatrix $\I_x$
along the parallelogram $\Pi(s X_x,t Y_x)$. Clearly we have $\tau
_{\,\Pi(s X_x,t Y_x)}=\mathsf{Id}_{\I \mathbb{R}^n}$, if $s=0$ or $t=0$
and
\begin{displaymath}
  \frac{\partial^2\tau _{\,\Pi(s X_x,t Y_x)}} {\partial s\partial
    t}\Big|_{(s,t)=(0,0)} =\xi_x, \quad\text{for every}\quad x
  \in\mathbb{R}^n.
\end{displaymath}
Since $\Pi(s X_x,t Y_x)$ is a differentiable field of parallelograms in
$\mathbb{R}^n$, the maps $\tau _{\,\Pi(s X_x,t Y_x)}$ are fibre
preserving diffeomorphisms of the indicatrix bundle $\I\mathbb{R}^n$ for
any $0\leq s, t\leq 1$, and for any $x\in \mathbb{R}^n$ the induced
family of diffeomorphisms of the indicatrix ${\I}_x$ is contained in
$\mathsf{Hol}_U(x)$. Hence the vector field $\xi\!\in\! {\mathfrak
  X}^{\infty}(\mathbb{R}^n)$ is strongly tangent to the fibred holonomy
group $\mathsf{Hol_f}(U)$.  \ep
\begin{corollary} 
  \label{cvf} 
  If $U\!\subset\! M$ is diffeomorphic to $\mathbb{R}^n$, then the
  curvature algebra $\mathfrak{R}(U)$ is tangent to the fibred
  holonomy group $\mathsf{Hol_f}(U)$.
\end{corollary}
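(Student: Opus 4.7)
The plan is to deduce the corollary almost immediately from Lemma \ref{lemma_curv} together with the result attributed to \cite{Mu_Na} recalled in the paragraph preceding that lemma. Specifically, the key fact I would invoke is: if $\mathcal V\subset{\mathfrak X}^{\infty}(M)$ is a set of strongly tangent vector fields to a subgroup $H\subset\mathsf{Diff}^{\infty}(M)$, then the Lie subalgebra of ${\mathfrak X}^{\infty}(M)$ generated by $\mathcal V$ is tangent to $H$. Since the curvature algebra $\mathfrak{R}(U)$ is by definition the Lie subalgebra of ${\mathfrak X}^{\infty}(\I U)$ generated by the curvature vector fields on $U$, it suffices to show that each generator is strongly tangent to $\mathsf{Hol_f}(U)$.

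First I would let $\mathcal V$ denote the set of curvature vector fields $r(X,Y)$ on $U$, with $X,Y\in{\mathfrak X}^{\infty}(U)$. By Lemma \ref{lemma_curv}, the hypothesis that $U$ is diffeomorphic to $\mathbb R^n$ gives that every such $r(X,Y)\in\mathcal V$ is strongly tangent to the fibred holonomy group $\mathsf{Hol_f}(U)$. Thus $\mathcal V$ is a set of strongly tangent vector fields to $\mathsf{Hol_f}(U)$.

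Next I would appeal to the two results recalled from \cite{Mu_Na}: the bracket of two strongly tangent vector fields to $H$ is again strongly tangent to $H$, and consequently the Lie subalgebra generated by any collection of strongly tangent vector fields is tangent to $H$. Applying this with $H=\mathsf{Hol_f}(U)$ and with the generating set $\mathcal V$, the Lie subalgebra generated by $\mathcal V$—which is precisely $\mathfrak{R}(U)$—is tangent to $\mathsf{Hol_f}(U)$, yielding the conclusion.

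There is essentially no obstacle in this step: the real content was packed into Lemma \ref{lemma_curv} (the explicit parallelogram construction producing a two-parameter family of fibre-preserving diffeomorphisms in $\mathsf{Hol_f}(U)$ with mixed second derivative equal to $r(X,Y)$) and into the algebraic lemma from \cite{Mu_Na} about strongly tangent vector fields being closed under brackets. The corollary is a bookkeeping combination of these two facts, so the write-up will be a short paragraph invoking Lemma \ref{lemma_curv} and the quoted statement, with no additional computation required.
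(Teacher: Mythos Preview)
Your proposal is correct and matches the paper's intended argument: the corollary is stated without proof precisely because it follows immediately from Lemma~\ref{lemma_curv} together with the result from \cite{Mu_Na} (recalled just before that lemma) that the Lie algebra generated by a set of strongly tangent vector fields is tangent to $H$. There is nothing to add.
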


\begin{lemma} 
  \label{nabla_X} 
  If $\xi \!\in\! {\mathfrak X}^{\infty}({\I}U)$ is strongly tangent to
  the fibred holonomy group $\mathsf{Hol_f}(U)$ of $(U, \F|_U)$ then its
  horizontal covariant derivative $\nabla_{\!\! X}\xi$ with respect to
  any vector field $X\in{\mathfrak X}^{\infty}(U)$ is also strongly
  tangent to $\mathsf{Hol_f}(U)$.  Moreover, if $U$ is diffeomorphic to
  $\mathbb{R}^n$ then its infinitesimal holonomy algebra
  $\mathfrak{hol}^{*}(U)$ is tangent to the fibred holonomy group
  $\mathsf{Hol_f}(U)$.
\end{lemma}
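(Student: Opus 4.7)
The plan is to prove the two assertions in order. For the first one, I would exploit the identity $\nabla_X\xi=[X^h,\xi]$ and realise the horizontal Berwald derivative as an infinitesimal group commutator of the local flow $\varphi^{X^h}_s$ of $X^h$ with the family $\{\Phi_{(t_1,\dots,t_k)}\}$ certifying the strong tangency of $\xi$. Once strong tangency is preserved under each $\nabla_X$, the second assertion follows by combining this closure with the bracket-closure property from \cite{Mu_Na} and an algebraic compatibility of $\nabla_X$ with Lie brackets.

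Concretely, for the first assertion I would put
\[\Psi_{(s,t_1,\dots,t_k)}:=\varphi^{X^h}_{-s}\circ\Phi^{-1}_{(t_1,\dots,t_k)}\circ\varphi^{X^h}_s\circ\Phi_{(t_1,\dots,t_k)}\]
and verify the items required for strong tangency. Since $\varphi^{X^h}_s$ carries $\I_x$ onto $\I_{\varphi^X_s(x)}$ by horizontal parallel translation along an integral curve of $X$ and the group-commutator structure returns to the original fibre, $\Psi$ is a smooth $(k+1)$-parameter family of fibre-preserving diffeomorphisms of (an open neighbourhood in) $\I U$. Its restriction to $\I_x$ is the nonlinear parallel translation along the closed loop at $x$ obtained by concatenating the $\Phi$-loop at $x$, the integral curve of $X$ from $x$ to $\varphi^X_s(x)$, the inverse $\Phi$-loop at $\varphi^X_s(x)$ (which is again a holonomy element by the definition of $\mathsf{Hol_f}(U)$), and the reverse integral curve; therefore $\Psi\in\mathsf{Hol_f}(U)$. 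The identity conditions $\Psi=\mathsf{Id}$ whenever $s=0$ or some $t_j=0$ are immediate, since then either $\Phi^{-1}\circ\Phi=\mathsf{Id}$ or $\varphi^{X^h}_{-s}\circ\varphi^{X^h}_s=\mathsf{Id}$ collapses the expression. The main obstacle is the mixed-derivative identity
\[\frac{\partial^{k+1}\Psi}{\partial s\,\partial t_1\cdots\partial t_k}\bigg|_{0}=[X^h,\xi]=\nabla_X\xi,\]
which I would establish by the standard group-commutator computation: expanding $\varphi^{X^h}_s=\mathsf{Id}+sX^h+O(s^2)$ and $\Phi_{(t_1,\dots,t_k)}=\mathsf{Id}+t_1\cdots t_k\,\xi+(\text{higher-order terms in the }t_i)$, the coefficient of $s\,t_1\cdots t_k$ in $\varphi^{X^h}_{-s}\circ\Phi^{-1}\circ\varphi^{X^h}_s\circ\Phi$ is exactly $[X^h,\xi]$ after the same cancellations that give the Lie bracket as the second mixed derivative of a commutator of two one-parameter groups; this argument is modelled on the bracket proof of \cite{Mu_Na}.

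For the second assertion, let $\mathcal{W}$ denote the smallest collection of vector fields on $\I U$ that contains the curvature vector fields and is closed under every $\nabla_X$ with $X\in\mathfrak{X}^{\infty}(U)$. By Lemma \ref{lemma_curv} the curvature vector fields are strongly tangent to $\mathsf{Hol_f}(U)$, and the first assertion propagates this property under each $\nabla_X$, so every element of $\mathcal{W}$ is strongly tangent. The quoted result from \cite{Mu_Na} then implies that the Lie subalgebra $\langle\mathcal{W}\rangle\subset\mathfrak{X}^{\infty}(\I U)$ generated by $\mathcal{W}$ is tangent to $\mathsf{Hol_f}(U)$, so it suffices to identify $\langle\mathcal{W}\rangle$ with $\mathfrak{hol}^{*}(U)$. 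The inclusion $\langle\mathcal{W}\rangle\subset\mathfrak{hol}^{*}(U)$ is immediate. For the reverse inclusion, the Leibniz-type identity
\[\nabla_X[\xi,\eta]=[X^h,[\xi,\eta]]=[[X^h,\xi],\eta]+[\xi,[X^h,\eta]]=[\nabla_X\xi,\eta]+[\xi,\nabla_X\eta],\]
which is an immediate consequence of the Jacobi identity, shows by induction on bracket length that $\nabla_X$ maps $\langle\mathcal{W}\rangle$ into itself; hence $\langle\mathcal{W}\rangle$ satisfies the three defining properties of $\mathfrak{hol}^{*}(U)$, and its minimality yields $\mathfrak{hol}^{*}(U)\subset\langle\mathcal{W}\rangle$, completing the proof.
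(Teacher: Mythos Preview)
Your proposal is correct and follows essentially the same route as the paper: realise $\nabla_X\xi=[X^h,\xi]$ as the mixed derivative of a group commutator between the family $\{\Phi_{(t_1,\dots,t_k)}\}$ and the flow of $X^h$ (which the paper writes as the parallel translation $\tau_{t_{k+1}}$ along the integral curves of $X$), then combine with Lemma~\ref{lemma_curv} and the bracket-closure result from \cite{Mu_Na}. Your treatment of the second assertion is in fact more explicit than the paper's: where the paper simply says ``the generating elements of $\mathfrak{hol}^{*}(U)$ are strongly tangent, therefore $\mathfrak{hol}^{*}(U)$ is tangent'', you carefully isolate the set $\mathcal{W}$, invoke the Leibniz identity $\nabla_X[\xi,\eta]=[\nabla_X\xi,\eta]+[\xi,\nabla_X\eta]$ to show $\langle\mathcal{W}\rangle$ is $\nabla_X$-stable, and then appeal to minimality---this spells out a step the paper leaves implicit.
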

\bp Let $\tau$ be the (nonlinear) parallel translation along the flow
$\varphi$ of the vector field $X$, i.e. for every $x\in U$ and $t \in
(-\varepsilon_x, \varepsilon_x)$ the map $\tau_t(x)\colon T_xU \to
T_{\varphi_t(x)}U$ is the (nonlinear) parallel translation along the
integral curve of $X$. If $\{\Phi_{(t_1,\dots,t_k)}\}_{t_i\in
  (-\varepsilon,\varepsilon)}$ is a ${\mathcal
  C}^{\infty}$-differentiable $k$-parameter family
$\{\Phi_{(t_1,\dots,t_k)}\}_{t_i\in (-\varepsilon,\varepsilon)}$ of
fibre preserving diffeomorphisms of the indicatrix bundle $(\I U,\pi
|_U,U)$ associated to the strongly tangent vector fields $\xi$ satisfying
the conditions 1.~and 2.~on page \pageref{stronly_tan_1}, then the
commutator
\begin{displaymath}
  [\Phi_{(t_1, \ldots ,t_k)}, \tau_{t_{k+1}}]:=
  \Phi^{-1}_{(t_1,...,t_k)}\circ \tau^{-1}_{t_{k+1}}\circ
  \Phi_{(t_1,...,t_k)}\circ \tau_{t_{k+1}}
\end{displaymath}
in the group $\mathsf{Diff}^{\infty}\big(\I U\big)$ fulfills
$[\Phi_{(t_1,...,t_k)},\tau_{t_{k+1}}]=\mathsf{Id}$, if some of its
variables equals $0$. Moreover
\begin{equation}
  \label{eq:phi_tau}
  \frac{\partial^{k+1}[\Phi_{(t_1...t_k)},\tau_{(t_{k+1})}]} {\partial
    t_1\;...\; \partial t_{k+1}} \Bigg|_{(0...0)}= \big[X^h, \xi\big]
\end{equation}
at any point of $U$, which shows that the vector field $\big[X^h,
\xi\big]$ is strongly tangent to $\mathsf{Hol_f}(U)$.  Moreover, since
the vector field $\xi$ is vertical, we have $h[X^h, \xi]=0$, and using
(\ref{eq:berwald_h_v}) we obtain
\begin{displaymath} 
  [X^h, \xi] =v[X^h, \xi] = \n_{X}\xi
\end{displaymath}
which yields to the first part of the assertion.  Moreover, with Lemma
\ref{lemma_curv} we obtain that the generating elements of
$\mathfrak{hol}^{*}(U)$ are strongly tangent to $\mathsf{Hol_f}(U)$,
therefore $\mathfrak{hol}^{*}(U)$ is tangent $\mathsf{Hol_f}(U)$.  \ep
\textbf{Proof} of Theorem \ref{thm:inf_hol_tan}.  Assertion (i) follows
from the previous lemma. Assertion (ii) is a consequence of Proposition
\ref{expo}.  \hfill$\rule{1ex}{1ex}$\par\medskip

\section{Holonomy algebras of Finsler surfaces}

We call a Finsler manifold $(M, \F)$ Fisler surface if $\dim M = 2$.
Let $M$ be diffeomorphic to $\mathbb{R}^2$ and let $X, Y\in {\mathfrak
  X}^{\infty}(M)$ be nonvanishing vector fields such that for any $x\in
M$ the values $X_x, Y_x\in T_xM$ are linarly independent. We denote by
$\xi\!\in\!\X{\I M}$ the curvature vector field
\begin{math}
  \xi \!=\!  r(X,Y)(x,y)\!=\!R_{(x,y)}(X_x,Y_x).
\end{math}
Since the indicatrix is 1-dimensional, any curvature vector field $\eta$
can be written as $\eta_x \!=\! f(x)\xi_x$, where the factor $f(x)$ is
arbitrary smooth function on $M$.  Therefore the commutators of
curvature vector fields are trivial and the curvature algebra
$\mathfrak{R}(M)$ is commutative.  Particularly, the curvature algebra
$\mathfrak{R}_x(M)$ at any point $x\!\in\! M$ is generated by the
vector field $\xi_x \!\in\!\X{\I_x M} $ and hence it is at most
1-dimensional.
\\
Even in this case, the infinitesimal holonomy algebra
$\mathfrak{hol}^{*}(x)$ at a point $x\!\in\! M$ (and hence the
corresponding holonomy group $\mathsf{Hol}_x(M)$) can be higher -- even
infinite -- dimensional. To show this we use a classical result of
S. Lie on the classification of Lie group actions on one-manifolds
(cf.~\cite{Brou} or \cite{Olver},
pp. 58-62):\\[1ex]
{\it If a finite-dimensional connected Lie group acts on a
  $1$-dimensional manifold without fixed points, than its dimension is
  less than $4$.}
\begin{proposition}\label{sec:prop-1}
  If the infinitesimal holonomy algebra $\mathfrak{hol}^{*}(x)$ contains
  $4$ simultanously non-vanishing $\mathbb R$-linearly independent
  vector fields, then the holonomy group $\mathsf{Hol}_M(x)$ is not a
  finite-dimensional Lie group.
\end{proposition}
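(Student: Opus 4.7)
The plan is to argue by contradiction using the cited classical theorem of Sophus Lie. Suppose $H := \mathsf{Hol}_M(x)$ is a finite-dimensional Lie group; I would first show that the hypothesis forces $\dim H \geq 4$, and then show that the identity component of $H$ acts on $\I_x M$ without fixed points, contradicting Lie's bound $\dim < 4$.

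For the lower bound on $\dim H$, I would invoke Theorem \ref{thm:inf_hol_tan}(i): $\mathfrak{hol}^{*}(x)$ is tangent to $H$ in the sense of Definition \ref{def:strongly}. Since $H$ is assumed to be a finite-dimensional Lie subgroup of $\mathsf{Diff}^{\infty}(\I_x M)$, any vector field $\xi$ tangent to $H$ arises as $\dot\Phi(0)$ for a $\mathcal C^1$-curve $\Phi$ in $H$ through the identity, so that $\xi$ lies in $T_{\mathsf{Id}}H = \mathrm{Lie}(H)$. Hence $\mathfrak{hol}^{*}(x) \subseteq \mathrm{Lie}(H)$. Since the inclusion $H \hookrightarrow \mathsf{Diff}^{\infty}(\I_x M)$ is faithful, the map $\mathrm{Lie}(H) \to \mathfrak{X}^{\infty}(\I_x M)$ is injective, so the four $\mathbb R$-linearly independent vector fields $\xi_1,\ldots,\xi_4 \in \mathfrak{hol}^{*}(x)$ are also linearly independent in $\mathrm{Lie}(H)$, yielding $\dim H \geq 4$.

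For the absence of fixed points, I would use the simultaneously non-vanishing assumption: at every point $y \in \I_x M$ there is some index $i$ with $\xi_i(y) \neq 0$, so the one-parameter subgroup $\{\exp(t\xi_i)\} \subset H_0$ displaces $y$, proving that $y$ is not fixed by the identity component $H_0$. Now $H_0$ is a connected finite-dimensional Lie group acting on the $1$-manifold $\I_x M$ without fixed points, so the cited theorem of Lie gives $\dim H_0 < 4$. This contradicts $\dim H_0 = \dim H \geq 4$ from the preceding paragraph, and therefore $\mathsf{Hol}_M(x)$ cannot be a finite-dimensional Lie group.

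The main subtlety I foresee is the identification used in the first step: that a vector field tangent to the finite-dimensional Lie subgroup $H$ in the sense of Definition \ref{def:strongly} automatically belongs to $\mathrm{Lie}(H)$. This rests on the smoothness of the inclusion $H \hookrightarrow \mathsf{Diff}^{\infty}(\I_x M)$, ensuring that any $\mathcal C^1$-path in $H$ has its tangent vector in the image of $\mathrm{Lie}(H) \subseteq \mathfrak{X}^{\infty}(\I_x M)$; once this is granted, the rest is a direct application of the hypothesis and Lie's classification.
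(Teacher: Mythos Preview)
Your proof is correct and follows essentially the same route as the paper's: assume finite-dimensionality, use the tangency of $\mathfrak{hol}^{*}(x)$ to force $\dim\mathrm{Lie}(H)\geq 4$, use the simultaneous non-vanishing to rule out fixed points, and conclude via Lie's classical bound. The paper's proof is considerably terser and leaves implicit both the inclusion $\mathfrak{hol}^{*}(x)\subseteq\mathrm{Lie}(H)$ and the passage from simultaneous non-vanishing to fixed-point-freeness, so your version is in fact a more careful rendering of the same argument, and you correctly flag the one genuine subtlety (that tangency in the sense of Definition~\ref{def:strongly} yields membership in $\mathrm{Lie}(H)$ when $H$ is a finite-dimensional Lie subgroup).
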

\begin{proof} Indeed, in this case the holonomy group acts on the
  1-dimensional indicatrix without fixed points.  If it would be
  finite-dimensional then its dimension, and hence the dimension of its
  Lie algebra should be less than $4$.  This is a contradiction.
\end{proof}

\subsection{Finsler surfaces of constant flag curvature}

The relation between the infinitesimal holonomy algebra and the
curvature algebra is enlightened by the following
\begin{theorem} 
  \label{berwald}
  Let $(M, \F)$ be a Fisler surface with non-zero constant flag
  curvature. The infinitesimal holonomy algebra $\mathfrak{hol}^{*}(x)$
  at a point $x\in M$ coincides with the curvature algebra
  $\mathfrak{R}_x$ at $x$ if and only if the mean Berwald curvature
  $E_{(x,y)}$ of $(M, \F)$ vanishes for any $y\in {\I}_xM$.
\end{theorem}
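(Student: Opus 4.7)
The plan is to exploit the fact that on a Finsler manifold of non-zero constant flag curvature the Riemannian curvature tensor is horizontally parallel, i.e.\ $\n R = 0$. Indeed, (\ref{gorb}) gives $R^i_{jk}(x,y) = \lambda(\delta^i_k g_{jm}y^m - \delta^i_j g_{km}y^m)$, and by Lemma \ref{covcurv} the tensor appearing in the parentheses is horizontally parallel, hence so is $R$. Applied to a curvature vector field $\xi = r(X,Y)$, the Leibniz rule then yields
\begin{displaymath}
  \n_W r(X,Y) \,=\, r(\n_W X, Y) \,+\, r(X, \n_W Y),
\end{displaymath}
where $\n_W X, \n_W Y$ are viewed as sections of the pull-back bundle via (\ref{iden}). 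Because $\dim M = 2$, I will work in a local coordinate frame with $X = \partial/\partial x^1$ and $Y = \partial/\partial x^2$ and expand $\n_W X = \alpha X + \beta Y$, $\n_W Y = \gamma X + \delta Y$, with $\alpha,\beta,\gamma,\delta$ smooth functions on $\hat TM$. The antisymmetry $r(X,X) = r(Y,Y) = 0$ and formula (\ref{covder}) then collapse the identity above to
\begin{displaymath}
  \n_W r(X,Y) \,=\, (\alpha + \delta)\,\xi \,=\, W^j G^l_{jl}(x,y)\,\xi.
\end{displaymath}

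Second, I will connect the $y$-dependence of the trace $G^l_{jl}$ to the mean Berwald curvature. Differentiating $G^l_{jl} = \partial G^l_j/\partial y^l$ once more in $y^k$ and comparing with (\ref{mberw}) gives
\begin{displaymath}
  \frac{\partial G^l_{jl}(x,y)}{\partial y^k}
  \,=\, \frac{\partial^2 G^l_j(x,y)}{\partial y^l\,\partial y^k}
  \,=\, E_{jk}(x,y),
\end{displaymath}
so the function $y \mapsto G^l_{jl}(x,y)$ is constant on the connected indicatrix $\I_x M$ if and only if $E_{(x,y)}$ vanishes for every $y \in \I_x M$.

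The last step reduces the theorem to linear algebra on the $1$-dimensional indicatrix. If $E_{(x,y)} \equiv 0$ on $\I_x M$, then $f_W(x) := W^j G^l_{jl}(x)$ depends only on $x$, so $\n_W \xi\big|_{\I_x M} = f_W(x)\,\xi\big|_{\I_x M}$ is a real scalar multiple of the generator $\xi_x$ of $\mathfrak{R}_x$. Iterating horizontal covariant derivatives keeps every produced vector field in $\mathbb R\,\xi_x$ and all brackets vanish there since this line is one-dimensional, giving $\mathfrak{hol}^{*}(x)\subseteq \mathfrak{R}_x$; the reverse inclusion is already known. Conversely, if $E_{j_0 k_0}(x,y_0) \ne 0$ for some $y_0\in \I_xM$ and indices $j_0,k_0$, I would take $W = \partial/\partial x^{j_0}$; then $f_W(x,\cdot) = G^l_{j_0 l}(x,\cdot)$ has non-vanishing $y^{k_0}$-derivative at $y_0$ and is therefore non-constant along $\I_xM$. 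In the chosen frame the components of $\xi$ are $\lambda(-y_2, y_1)$ with $y_i := g_{im}y^m$, which vanish only at $y = 0$, so $\xi$ has no zeros on $\I_x M$. Consequently $\n_W \xi\big|_{\I_xM} = f_W(x,\cdot)\,\xi\big|_{\I_xM}$ cannot be a real constant multiple of $\xi\big|_{\I_xM}$, so $\mathfrak{R}_x \subsetneq \mathfrak{hol}^{*}(x)$.

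The principal technical obstacle is the clean derivation of $\alpha + \delta = W^j G^l_{jl}$ in the first step, which relies on careful bookkeeping with the identification (\ref{iden}) and the local formula (\ref{covder}) for the horizontal Berwald covariant derivative; once that identity is established, everything else is dimension-two linear algebra on the fibre.
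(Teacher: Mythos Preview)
Your argument is essentially the same as the paper's: both use Lemma \ref{covcurv} to conclude that $\nabla R = 0$, apply the Leibniz rule to a curvature vector field, and reduce the first horizontal covariant derivative of $\xi = r(\partial_1,\partial_2)$ to the trace factor $G^l_{kl}$ multiplying $\xi$, whose $y$-dependence is exactly governed by the mean Berwald curvature. The paper organizes this via the wedge product $X\wedge Y$ and allows arbitrary $X,Y$ (picking up an extra term that is immediately seen to lie in $\mathfrak R(U)$), whereas you take coordinate vector fields from the start; you are also a bit more explicit about the converse direction and the non-vanishing of $\xi$ on the indicatrix, but the substance of the computation is identical.
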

\begin{proof} Let $U\!\subset\!M$ be a neighbourhood of $x\!\in\! M$
  diffeomorphic to $\mathbb{R}^2$.  Identifying $U$ with $\mathbb{R}^2$
  and considering a coordinate system $(x_1,x_2)$ in $\mathbb{R}^2$ we
  can write
  \begin{displaymath}
    R^i_{jk}(x,y) = \lambda\left(\delta_j^ig_{km}(x,y)y^m - 
      \delta_k^ig_{jm}(x,y)y^m \right), \quad   \text{with}\quad \lambda\neq 0.
  \end{displaymath}
  Since the curvature tensor field is skew-symmetric, $R_{(x,y)}$ acts
  on the one-dimensional wedge product $T_xM\wedge T_xM$. According to
  Lemma \ref{covcurv} the covariant derivative of the curvature
  vector field
  \begin{math}
    \xi \!= \!R(X,Y)\!=\!\frac12 R(X\otimes Y \!-\! Y\otimes X) \!=\! R(X\wedge Y)
  \end{math}
  can be written in the form
  \begin{displaymath}
    \n_Z\xi = \n_Z\left(r(X,Y)\right)=R\left(\n_Z(X\wedge Y)\right) = 
    R(\n_ZX\wedge Y+X\wedge \n_ZY),
  \end{displaymath}
  where $X, Y, Z\!\in\! \X{U}$. If $X\!=\!X^i\frac{\partial}{\partial
    x^i}$, $Y\!=\!Y^i\frac{\partial}{\partial x^i}$ and
  $Z\!=\!Z^i\frac{\partial}{\partial x^i}$ then we have $X\wedge Y =
  \frac12\left(X^1Y^2-X^2Y^1\right)\frac{\partial}{\partial x^1}
  \wedge\frac{\partial}{\partial x^2}$ and hence we obtain
  \begin{equation}
    \label{dercurv}
    \n_Z\xi =  R\left(\n_k\left((X^1Y^2-Y^1X^2)\frac{\partial}{\partial x^1}
        \wedge\frac{\partial}{\partial x^2}\right)Z^k\right)=
  \end{equation}
  \begin{displaymath}
    = R\left(\frac{\partial(X^1Y^2-Y^1X^2)} {\partial
        x^k} Z^k \frac{\partial}{\partial x^1}
      \wedge\frac{\partial}{\partial x^2}\right) +
    (X^1Y^2-Y^1X^2)R\left(\n_k\left(\frac{\partial}{\partial x^1}
        \wedge\frac{\partial}{\partial x^2}\right)\right)Z^k,
  \end{displaymath}
  where we denote the covariant derivative $\n_Z$ by $\n_k$ if $Z \!=\!
  \frac {\partial}{\partial x^k}$, $k\!=\!1,2$.  For given vector fields
  $X, Y, Z \in {\mathfrak X}^{\infty}(U)$ the expression
  $\frac{\partial(X^1Y^2-Y^1X^2)}{\partial x^k}Z^k$ is a function on
  $U$. Hence there exists a function $\psi$ on $U$ such that
  \[R\left(\frac{\partial(X^jY^h-Y^jX^h)}{\partial x^k}Z^k
    \frac{\partial}{\partial x^j}\wedge\frac{\partial}{\partial
      x^h}\right) = \psi\, R(X\wedge Y) = \psi\, R(X,Y),\] 
  and
  $\psi R(X,Y)$ is an element of the curvature algebra
  $\mathfrak{R}(U)$ of the submanifold $(U, \F|_U)$. \\[1ex]
  Now, we investigate the second term of the right hand side of
  (\ref{dercurv}).
  \begin{alignat*}{1}
    \n_k\Big(\frac{\partial}{\partial x^1}
    \wedge\frac{\partial}{\partial x^2}\Big)&=
    \Big(\n_k\frac{\partial}{\partial x^1}\Big)
    \wedge\frac{\partial}{\partial x^2} + \frac{\partial}{\partial x^1}
    \wedge\Big(\n_k\frac{\partial}{\partial x^2}\Big) =
    \\
    & = G^l_{k1}\frac{\partial}{\partial x^l}
    \wedge\frac{\partial}{\partial x^2} + \frac{\partial}{\partial x^1}
    \wedge G^m_{k2}\frac{\partial}{\partial x^m} = \left(G^1_{k1} +
      G^2_{k2}\right)\frac{\partial}{\partial x^1}
    \wedge\frac{\partial}{\partial x^2}.
  \end{alignat*}
  Hence 
  \begin{displaymath}
    (X^1Y^2\!-\!Y^1X^2)R\Big(\n_k\Big(\frac{\partial}{\partial x^1}
    \wedge\frac{\partial}{\partial x^2}\Big)\Big)Z^k \!=\!
    \left(G^1_{k1} \!+\! G^2_{k2}\right)Z^k R(X,Y)
    \!=\! \left(G^1_{k1} + G^2_{k2}\right)Z^k\xi
  \end{displaymath}
  This expression belongs to the curvature algebra if and only if the
  function $G^1_{k1} + G^2_{k2}$ does not depend on the variable $y$,
  i.e. if and only if
  \[E_{kh} = \frac{\partial\left(G^1_{k1} + G^2_{k2}\right)}{\partial
    y^h} = 0, \quad h, k =1, 2,\] identically.
\end{proof}

\begin{remark} 
  \label{covxi} 
  \emph{Let $\xi \!=\! R(X,Y)$ be a curvature vector field. Assume that
    the vector fields $X, Y\!\in\! {\mathfrak X}^{\infty}(M)$ have
    constant coordinate functions in a local coordinate system
    $(x^1,...,x^n)$ of the Finsler surface $(M, \F)$. Then we have in
    this coordinate system}
  \[\n_Z\xi = \left(G^1_{k1} + G^2_{k2}\right)Z^k\xi .\]
\end{remark}

\subsection{Randers surfaces with $\mathfrak{hol}^{*}(x)=
  \mathfrak{R}_x$}

A Fisler manifold $(M,\F)$ is called \emph{Randers manifold} if its
Finsler function has the form $\F = \alpha + \beta$, where $\alpha =
\sqrt{\alpha_{jk}(x)y^jy^k}$ is a Riemannian metric and $\beta =
\beta_j(x)y^j$ is a linear form. Z.~Shen constructed in \cite{Shen2}
families of Randers surfaces depending on the real parameter $\epsilon$,
which are of constant flag curvature $1$ on the unit sphere
$S^2\subset{\mathbb R}^3$ and of constant flag curvature $-1$ on a disk
${\mathbb D}^2\subset{\mathbb R}^2$.  These Finsler surfaces are not
projectively flat and have vanishing $S$-curvature (c.f.~\cite{Shen2},
Theorems 1.1 and 1.2). Their Finsler function is defined by
\begin{equation}
  \label{surface} 
  \alpha = \frac{\sqrt{\epsilon^2h(v,y)^2 + h(y,y)\left(1 - \epsilon^2h(v,v)\right)}}
  {1 - \epsilon^2h(v,v)}, \quad \beta = \frac{\epsilon h(v,y)}
  {1 - \epsilon^2h(v,v)}, 
\end{equation}
where $h(v,y)$ is the standard metric of the sphere $S^2$, respectively
$h(v,y)$ is the standard Klein metric on the unit disk ${\mathbb D}^2$
and $v$ denotes the vector field defined by $(-x_2,x_1,0)$ at
$(x_1,x_2,x_3)\in S^2$, respectively by $(-x_2,x_1)$ at $(x_1,x_2)\in
{\mathbb D}^2$.
\begin{theorem} 
  For any Randers surface defined by (\ref{surface}) the infinitesimal
  holonomy algebra $\mathfrak{hol}^{*}(x)$ at a point $x\in M$ coincides
  with the curvature algebra $\mathfrak{R}_x$.
\end{theorem}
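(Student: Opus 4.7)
The plan is to reduce the statement to Theorem \ref{berwald}. That theorem characterizes the equality $\mathfrak{hol}^{*}(x) = \mathfrak{R}_x$ on a Finsler surface of non-zero constant flag curvature by the vanishing of the mean Berwald curvature $E_{(x,y)}$. So I would verify the two hypotheses separately: constant non-zero flag curvature, and vanishing of the tensor $E$.

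The first hypothesis is explicitly recorded above from \cite{Shen2}: the family defined by (\ref{surface}) gives Randers metrics of constant flag curvature $+1$ on $S^2$ and $-1$ on $\mathbb D^2$, so the flag curvature is a non-zero constant. For the second hypothesis I would use the observation, also quoted from \cite{Shen2}, that these metrics have vanishing $S$-curvature. The mean Berwald curvature is the vertical Hessian of the $S$-curvature; more precisely, in the standard Finsler-geometric formalism one has
\begin{equation*}
E_{jk}(x,y) \;=\; \tfrac{1}{2}\,\frac{\partial^{2} S(x,y)}{\partial y^{j}\,\partial y^{k}},
\end{equation*}
as can be seen directly by comparing (\ref{mberw}) with the coordinate expression $S(x,y) = \partial G^{m}(x,y)/\partial y^{m} - y^{m}\partial\log\sigma(x)/\partial x^{m}$ for the $S$-curvature associated with a fixed volume form $\sigma(x)\,dx$. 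Since $S \equiv 0$ for Shen's Randers surfaces, this yields $E_{jk} \equiv 0$ on the whole indicatrix bundle.

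With both hypotheses in place, Theorem \ref{berwald} applies at every $x \in M$ and gives $\mathfrak{hol}^{*}(x) = \mathfrak{R}_x$, which is the claim.

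The only potentially delicate step is justifying the relation between $S$-curvature and mean Berwald curvature; this is a standard Finsler-geometric identity (see Shen's book, in the chapter on $S$- and $E$-curvature), and for the present paper it suffices to cite it. Once that identity is in hand, the proof is essentially a one-line application of Theorem \ref{berwald} to Shen's construction, so no obstacle of a genuinely calculational nature arises.
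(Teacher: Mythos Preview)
Your proposal is correct and follows essentially the same approach as the paper: both reduce to Theorem~\ref{berwald} via Shen's result that these metrics have non-zero constant flag curvature and vanishing $S$-curvature, then use the relation between $S$-curvature and mean Berwald curvature to conclude $E\equiv 0$. The paper cites Proposition~6.1.3 in \cite{Shen1} for this last step (vanishing $E$ is equivalent to $S$ being a linear form), while you use the explicit Hessian formula, but this is the same content.
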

\bp According to Theorem 1.1 and 1.2 in \cite{Shen2}, the above
classes of not locally projectively flat Randers surfaces with
non-zero constant flag curvature have vanishing S-curvature.
Moreover, Proposition 6.1.3 in \cite{Shen1}, p.~80, states that the
mean Berwald curvature vanishes if and only if the S-curvature is a
linear form on the surface. Hence the assertion follows from 
Corollary \ref{berwald}. \ep

\subsection{Randers surfaces with infinite dimensional
  $\mathfrak{hol}^{*}(x)$}

Projectively flat Randers manifolds with constant flag curvature
were classified by Z. Shen in \cite{Shen3}. He proved that any
projectively flat Randers manifold with non-zero constant flag curvature 
has negative curvature. This metric can be normalized by a constant factor  
so that the curvature is $-\frac14$. In this case it is isometric to the 
Randers manifold $(M,\F)$ defined by $\F = \alpha + \beta$ on the unit 
ball $\mathbb D^2 \subset \mathbb R^2$, where 
\begin{equation}
  \label{projective} 
  \alpha = \frac{\sqrt{|y|^2 - \left(|x|^2|y|^2 - 
        \langle x,y\rangle^2\right)}}{1 - |x|^2}, \quad 
  \beta = \frac{\langle x,y\rangle}{1 - |x|^2} + 
  \frac{\langle a,y\rangle}{1 - \langle a,x\rangle} 
\end{equation}
and $a\in \mathbb R^2$ is any constant vector with $|a|<1$.
\begin{theorem}
  Let $(M, \F)$ be a locally projectively flat Randers surface of
  non-zero constant flag curvature. There exists a point $x\!\in\! M$
  such that the infinitesimal holonomy algebra $\mathfrak{hol}^{*}(x)$
  is an infinite dimensional Lie algebra and hence the holonomy group of
  $(M, \F)$ is not a finite-dimensional Lie group.
\end{theorem}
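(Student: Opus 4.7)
The plan is to apply Proposition \ref{sec:prop-1} by exhibiting infinitely many $\mathbb{R}$-linearly independent, simultaneously non-vanishing vector fields in $\mathfrak{hol}^{*}(x_0)$ at a suitable point $x_0\in M$. One works in the projectively flat coordinates in which the Finsler function has Shen's normal form (\ref{projective}).

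Since $(M,\F)$ is projectively flat, the spray coefficients take the form $G^i(x,y)=P(x,y)\,y^i$ with $P$ positively homogeneous of degree one in $y$. A direct differentiation together with Euler's identity yields
\[G^1_{k1}(x,y)+G^2_{k2}(x,y)=3\,P_k(x,y),\qquad P_k:=\frac{\partial P}{\partial y^k}.\]
Applying Remark \ref{covxi} to the curvature vector field $\xi=R(\partial_1,\partial_2)$ gives $\nabla_{\partial_k}\xi=3\,P_k\,\xi$. On the other hand, the constant flag curvature formula (\ref{gorb}) shows that $\xi$ is proportional to $\F_2\partial_1-\F_1\partial_2$ on the indicatrix $\I_{x_0}M$, and the Euler identity $y^j\F_j=\F=1$ rules out simultaneous vanishing of $\F_1$ and $\F_2$. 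Hence $\xi$ has no zeros on $\I_{x_0}M$.

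Consequently every iterated covariant derivative $\nabla_{\partial_{k_1}}\!\cdots\nabla_{\partial_{k_r}}\xi$ has the form $F_{k_1\cdots k_r}(x,y)\cdot\xi$, where the functions $F_{k_1\cdots k_r}$ are generated by the recursion
\[\nabla_{\partial_k}(f\xi)=\bigl(\nabla_{\partial_k}f+3\,P_k\,f\bigr)\xi,\qquad \nabla_{\partial_k}f=\frac{\partial f}{\partial x^k}-G^m_k\,\frac{\partial f}{\partial y^m},\]
obtained by combining (\ref{covder}) with the formula for $\nabla_{\partial_k}\xi$. Since $\xi|_{\I_{x_0}M}$ is nowhere vanishing, showing that $\mathfrak{hol}^{*}(x_0)$ is infinite-dimensional reduces to proving that the $\mathbb{R}$-linear span of the restrictions $\{F_{k_1\cdots k_r}(x_0,\cdot)\}$ to $\I_{x_0}M$ has infinite dimension.

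The main technical difficulty lies in this last step: one must choose $x_0$ judiciously and exploit the explicit expression (\ref{projective}) of $\F=\alpha+\beta$ to compute $P(x_0,y)$ and its successive derivatives. The Randers term $\beta$ ensures that $P$ depends nontrivially on $y$, so the mean Berwald curvature is not identically zero and Theorem \ref{berwald} already guarantees $\mathfrak{hol}^{*}(x_0)\supsetneq\mathfrak{R}_{x_0}$. One then verifies that iteration of the recursion produces algebraic functions of the indicatrix coordinate whose complexity grows without bound, yielding infinitely many $\mathbb{R}$-linearly independent restrictions to $\I_{x_0}M$. Once this linear independence is in place, Proposition \ref{sec:prop-1} immediately implies that $\mathsf{Hol}_M(x_0)$ cannot be a finite-dimensional Lie group.
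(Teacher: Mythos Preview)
Your setup matches the paper's exactly: reduce to projectively flat coordinates, use Remark \ref{covxi} to obtain $\nabla_{\partial_k}\xi=3P_k\,\xi$, observe that $\xi$ is nowhere zero on the indicatrix, and conclude that every iterated covariant derivative has the form $F_{k_1\cdots k_r}\xi$. So far so good, and this is precisely the route the paper takes.

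The gap is in the final step. You write that ``one then verifies that iteration of the recursion produces algebraic functions of the indicatrix coordinate whose complexity grows without bound'', but this is an assertion, not an argument. Nothing in the recursion $\nabla_{\partial_k}(f\xi)=(\nabla_{\partial_k}f+3P_kf)\xi$ prevents the span of the $F_{k_1\cdots k_r}(x_0,\cdot)$ from being finite-dimensional; indeed, by Theorem \ref{berwald} this span is one-dimensional precisely when $E$ vanishes, so the growth you need is a genuine feature of the metric that must be checked, not a formal consequence of the recursion. Appealing to Theorem \ref{berwald} only gives you $\dim\geq 2$, which is far from enough for Proposition \ref{sec:prop-1}.

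The paper closes this gap by an explicit computation at $x_0=0$. From (\ref{Pexpr}) one finds $P|_{x=0}=\tfrac12(|y|-\langle a,y\rangle)$, and then evaluates $P_k$, $P_{jk}$, and $\partial^2 P/\partial x^j\partial y^k$ at $x=0$. Passing to the angular coordinate $y^1/|y|=\cos t$, $y^2/|y|=\sin t$ on $\I_0M$, the functions attached to $\xi$, $\nabla_1\xi$, $\nabla_2\xi$, $\nabla_1\nabla_2\xi$ become, up to constants and constant shifts,
\[
1,\qquad \cos t,\qquad \sin t,\qquad \sin 2t\,(2-a^1\cos t-a^2\sin t),
\]
which are visibly linearly independent. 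That gives four simultaneously non-vanishing, $\mathbb R$-independent elements of $\mathfrak{hol}^*(0)$, and Proposition \ref{sec:prop-1} finishes the proof. Note in particular that only \emph{four} such vector fields are required; you do not need to produce infinitely many directly, since Proposition \ref{sec:prop-1} already upgrades ``$\geq 4$'' to ``infinite-dimensional'' via Lie's classification.
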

\bp Since the metrics $\F \!=\! \alpha \!+\! \beta$ defined on the unit
ball $\mathbb D^2 \subset \mathbb R^2$ by (\ref{projective}) are
projectively flat, the geodesic coefficients (\ref{eq:G_i}) are of the
form $G^i(x,y) = P(x,y)y^i$, and hence
\begin{displaymath}
  G^i_k = \frac{\partial P}{\partial y^k}y^i + P\delta^i_k,\quad
  G^i_{kl} = \frac{\partial^2 P}{\partial y^k\partial y^l}y^i +
  \frac{\partial P}{\partial y^k}\delta^i_l + \frac{\partial P}{\partial
    y^l}\delta^i_k, \quad G^m_{km} = (n + 1)\frac{\partial P}{\partial
    y^k}. 
\end{displaymath}
Let us choose $x\!=\!0\in\mathbb D^2 \subset \mathbb R^2$. According to
\cite{Shen4}, eq. (41) and (42), pp. 1722-1723, the function $P(x,y)$
has the form
\begin{equation}
  \label{Pexpr}
  P(x,y) = \frac12\left\{\frac{\sqrt{|y|^2 - 
        \left(|x|^2|y|^2 - \langle x,y\rangle^2\right)} 
      + \langle x,y\rangle}{1 - |x|^2} - \frac{\langle a,y\rangle}
    {1 - \langle a,x\rangle}\right\}.
\end{equation}
Using Remark \ref{covxi} we obtain
\begin{equation}
  \label{n1xi}\n_Y\xi = G^m_{km}Y^k\xi = 
  3\frac{\partial P}{\partial y^k}Y^k\xi.
\end{equation}
Hence
\[\n_X\left(\n_Y\xi\right) = 3\n_X\left(\frac{\partial P}
  {\partial y^k}Y^k\xi\right) = 3\left\{\n_X\left(\frac{\partial P}
    {\partial y^k}Y^k\right)\xi + \left(\frac{\partial P}
    {\partial y^k}Y^k\right)\left(\frac{\partial P}
    {\partial y^l}X^l\right)\right\}\xi.\]
Assume that the vector field $Y$ has constant coordinate functions. 
Then we can write 
\begin{displaymath}
  \nabla_X \left(\frac{\partial P}{\partial y^k}Y^k\right)= 
  \left(\frac {\partial^2 P}{\partial x^j\partial y^k} -
    G_j^k\frac{\partial^2 P}{\partial y^k\partial y^k}\right)Y^kX^j =
\end{displaymath}
\begin{displaymath}
  = \left(\frac {\partial^2 P}{\partial x^j\partial y^k} -
    \Big(\frac{\partial P}{\partial y^j}y^m + P\delta^m_j\Big)
    \frac{\partial^2 P}{\partial y^k\partial y^m}\right)Y^kX^j =
  \left(\frac {\partial^2 P}{\partial x^j\partial y^k} - P\frac{\partial^2
      P}{\partial y^k\partial y^j}\right)Y^kX^j.
\end{displaymath}
It follows that
\begin{equation}
  \label{n2xi}
  \n_X\!\left(\n_Y\xi\right) = 
  3\left\{\frac {\partial^2 P}{\partial x^j\partial y^k} - 
    P\frac{\partial^2 P}{\partial y^k\partial y^j} + \frac{\partial P}
    {\partial y^k}\frac{\partial P}{\partial y^l}\right\}Y^kX^l\xi.\end{equation}
We want to prove that the vector fields $\xi\big|_{x=0}$, $\n_1\xi|_{x=0}$, 
$\n_2\xi|_{x=0}$ and $\n_1\left(\n_2\xi\right)|_{x=0}$ are linearly 
independent. We obtain from equations (\ref{n1xi}) and (\ref{n2xi}) that 
it is sufficient to show that the functions 
\begin{equation}
  \label{indep}
  1,\quad \frac{\partial P}{\partial y^1}\Big|_{x=0},\quad 
  \frac{\partial P}{\partial y^2}\Big|_{x=0}\quad 
  \text{and}\quad  \left(\frac {\partial^2 P}{\partial x^1\partial y^2} - 
    P\frac{\partial^2 P}{\partial y^1\partial y^2} + \frac{\partial P}
    {\partial y^1}\frac{\partial P}{\partial y^2}\right)\Big|_{x=0}\end{equation}
are linearly independent. We obtain from (\ref{Pexpr}) that 
\begin{displaymath}
  \ P\Big|_{x=0} = \frac12\left(|y| - \langle a,y\rangle\right), \quad  \frac{\partial
    P}{\partial y^k}\Big|_{x=0} =  \frac12\left(\frac{y^k}{|y|} -
    a^k\right), \quad \frac{\partial^2 P}{\partial y^j\partial
    y^k}\Big|_{x=0} =  \frac{1}{2|y|}\left(\delta^j_k -
    \frac{y^jy^k}{|y|^2}\right)
\end{displaymath}
and
\begin{displaymath}
  \frac{\partial P}{\partial x^j}\Big|_{x=0} = 
  \frac12\left(y^j - \langle a,y\rangle a^j\right),\quad \quad
  \frac{\partial^2 P}{\partial x^j \partial y^k}\Big|_{x=0} =
  \frac12\left(\delta^j_k - a^ja^k\right).
\end{displaymath}
Putting
\begin{displaymath}
  \cos t = \frac{y^1}{|y|}, \quad \sin t = \frac{y^2}{|y|} 
\end{displaymath}
and omitting the constant terms from the last three functions we
obtain that the functions (\ref{indep}) are independent if and only if
the functions
\[1,\quad \cos t,\quad \sin t, \quad \cos t\sin t\left(1 - a^1\cos t -
  a^2\sin t\right) + \left(\cos t - a^1\right) \left(\sin t -
  a^2\right),\] or equivalently, the functions $1$, $\cos t$, $\sin t$,
$\sin 2t (2\!-\!a^1\cos t \!-\! a^2\sin t)$ are linearly
independent. Clearly, this is the case and hence the infinitesimal
holonomy algebra contains $4$ linearly independent vector fields. It
follows from Proposition \ref{sec:prop-1} that $\mathfrak{hol}^{*}(x)$
is infinite-dimensional and hence the holonomy group $\mathsf{Hol}_M(x)$
is not a finite-dimensional Lie group. \ep

\section{The holonomy group of the Funk surface}

\begin{definition} 
  \emph{A Randers surface $(M, \F)$ is called \emph{Funk surface} if
    its Finsler function is defined by
    \begin{equation}
      \label{eq:funk}
      \F(x,y) = \frac{\sqrt{|y|^2 - 
          \left(|x|^2|y|^2 - \langle x,y\rangle^2\right)} + \langle
        x,y\rangle}{1 - |x|^2}
    \end{equation}
    on the unit disk $\mathbb D^2 \!=\! \{x\!\in\!\mathbb{R}^2,
    |x|\!<\!1\}$.}
\end{definition}
We remark, that (\ref{eq:funk}) can be obtained from (\ref{projective})
putting $a=0$.  The Funk surface is a projectively flat Finsler surface
of constant flag curvature $-\frac{1}{4}$.
\\[1ex]
We recall that the Lie algebra ${\mathfrak X}^{\infty}{(S^1)}$ of smooth
vector fields on the circe contains a dense subalgebra known as the
\emph{real Witt algebra} (\cite{FF}, p.164). It consists of vector fields
with finite Fourier series, and hence it is linearly generated by the
vector fields
\begin{displaymath}
  \cos n t \, \frac{\partial}{\partial t}, \quad   
  \sin n t \, \frac{\partial}{\partial t}, \quad n = 0,1,2,\dots
\end{displaymath}
For the Funk surface the indicatrix $\I_0M$ at $x\!=\!0\!\in\! \mathbb
D^2$ is the unit circle $S^1\subset T_0\mathbb D^2$ and we have the
following
\begin{theorem}
  \label{prop:funk}
  The infinitesimal holonomy algebra of the Funk surface $(\mathbb
  D^2, \F)$ at $0\in \mathbb D^2$ contains the real Witt algebra.
\end{theorem}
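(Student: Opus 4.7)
The plan is to exhibit all generators $\cos nt\,\partial_t$ and $\sin nt\,\partial_t$ of the real Witt algebra inside $\mathfrak{hol}^{*}(0)$ via iterated horizontal Berwald covariant derivatives of the single curvature vector field $\xi = R(\partial_{x^1},\partial_{x^2})$, followed by a Lie-bracket induction on Fourier degree. Throughout I parametrize $\I_0\mathbb D^2 = S^1$ by $y=(\cos t,\sin t)$, so every vertical vector field on $\I_0\mathbb D^2$ has the form $f(t)\,\partial_t$, and I use that $\mathfrak{hol}^{*}(0)$ is closed under the Lie bracket of $\X{\I_0\mathbb D^2}$ (restriction of vertical vector fields commutes with brackets).

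\textbf{Step 1 (first-order data).} At $x=0$ the Funk function satisfies $\F(0,y)=|y|$, hence $g_{jk}(0,y)=\delta_{jk}$, and (\ref{Pexpr}) with $a=0$ gives $P|_{x=0}=\tfrac12|y|$. A direct evaluation of (\ref{gorb}) with $\lambda=-\tfrac14$ combined with the identification (\ref{iden}) shows that $\xi|_{\I_0M}$ is a nonzero constant multiple of $\partial_t$. Remark \ref{covxi} together with $G^m_{km}|_{x=0}=3\,\partial P/\partial y^k|_{x=0}=\tfrac{3y^k}{2}$ on $S^1$ then gives $\nabla_{\partial_{x^1}}\xi \propto \cos t\,\partial_t$ and $\nabla_{\partial_{x^2}}\xi \propto \sin t\,\partial_t$. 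Since $[\cos t\,\partial_t,\sin t\,\partial_t]=\partial_t$, property (ii) of the infinitesimal holonomy algebra yields $\partial_t,\cos t\,\partial_t,\sin t\,\partial_t \in \mathfrak{hol}^{*}(0)$.

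\textbf{Step 2 (degree-2 data).} I apply (\ref{n2xi}) with $X=\partial_{x^j}$, $Y=\partial_{x^k}$, using the $x=0$ values $\partial^2 P/\partial x^j\partial y^k=\tfrac12\delta^j_k$, $P=\tfrac12$ on $S^1$, $\partial^2 P/\partial y^j\partial y^k=\tfrac12(\delta^j_k-y^jy^k)$ on $S^1$, and the Euler identity $y^l\,\partial^2 P/\partial y^k\partial y^l=0$ that eliminates the $G^l_j\,\partial^2 P/\partial y^k\partial y^l$ contribution. The resulting coefficient of $\xi$ is a linear combination of $\delta^j_k$ and $y^jy^k$. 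Choosing $(j,k)=(1,1),(2,2),(1,2)$ and converting $\cos^2 t,\cos t\sin t,\sin^2 t$ into $\cos 2t,\sin 2t$ and constants via the double-angle formulas, I conclude that $\cos 2t\,\partial_t$ and $\sin 2t\,\partial_t$ belong to $\mathfrak{hol}^{*}(0)$.

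\textbf{Step 3 (induction on Fourier degree).} Assuming inductively that $\cos kt\,\partial_t,\sin kt\,\partial_t \in \mathfrak{hol}^{*}(0)$ for all $1\le k\le n$ with $n\ge 2$, the formula $[f\partial_t,g\partial_t]=(fg'-f'g)\partial_t$ and product-to-sum identities yield
\begin{align*}
[\cos t\,\partial_t,\sin nt\,\partial_t] &= \tfrac{n+1}{2}\cos(n-1)t\,\partial_t + \tfrac{n-1}{2}\cos(n+1)t\,\partial_t, \\
[\cos t\,\partial_t,\cos nt\,\partial_t] &= \tfrac{1-n}{2}\sin(n+1)t\,\partial_t - \tfrac{n+1}{2}\sin(n-1)t\,\partial_t.
\end{align*}
The coefficients $\tfrac{n-1}{2}$ and $\tfrac{1-n}{2}$ in front of the degree-$(n+1)$ terms are nonzero for $n\ge 2$, so $\cos(n+1)t\,\partial_t$ and $\sin(n+1)t\,\partial_t$ solve as $\mathbb R$-linear combinations of vector fields already in $\mathfrak{hol}^{*}(0)$. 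The induction yields the entire real Witt algebra.

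The main obstacle is Step 2: one must verify that the genuinely second-Fourier-degree contribution $y^jy^k$ does not cancel among the three terms in (\ref{n2xi}). The Euler-homogeneity relation is precisely what kills the troublesome $G^l_j\,\partial^2 P/\partial y^k\partial y^l$ summand and leaves a clean sum whose $y^jy^k$ coefficient, evaluated on $|y|=1$, is nonzero; without this, the degree-$2$ output could in principle collapse into the span of $1,\cos t,\sin t$ and the induction could not even start.
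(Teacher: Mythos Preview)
Your proof is correct and follows essentially the same three-stage strategy as the paper: compute $\xi$ and $\nabla_k\xi$ at $x=0$ to obtain $\partial_t,\cos t\,\partial_t,\sin t\,\partial_t$; use second covariant derivatives via (\ref{n2xi}) to reach the degree-$2$ generators $\cos 2t\,\partial_t,\sin 2t\,\partial_t$; and then induct on Fourier degree by Lie brackets. The only cosmetic differences are that the paper reads off $\partial_t$ directly from $\xi=-\tfrac14\partial_t$ (your bracket $[\cos t\,\partial_t,\sin t\,\partial_t]=\partial_t$ is redundant but harmless), and in the induction the paper takes the symmetric/antisymmetric combinations $[\cos t\,\partial_t,\cos kt\,\partial_t]\mp[\sin t\,\partial_t,\sin kt\,\partial_t]$ to isolate pure $\sin(k+1)t$ and $\cos(k+1)t$ terms, whereas you use single brackets and subtract the known degree-$(n-1)$ piece---an equivalent manoeuvre.
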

\begin{proof}
  Let us consider the curvature vector field
  \[\xi\!=\!R \left( \frac{\partial}{\partial
      x_1},\frac{\partial}{\partial x_2} \right)\Big|_{x=0} = -\frac
  14\left(\delta_2^ig_{1m}(0,y)y^m - \delta_1^ig_{2m}(0,y)y^m\right) \]
  Since $\F|_{x=0}= |y|$, we have
  $g_{jm}(0,y)y^m = y^j$ and hence
  \begin{math}
    \xi\!= \!\frac14\! \left[ \begin {array}{r} y^2 \\
        -y^1\end {array} \right].
  \end{math}
  According to (\ref{n1xi}) the first covariant derivatives are:
  \[\nabla_k \xi = 3\frac{\partial
    P}{\partial y^k}\Big|_{x=0}\xi = \frac32\frac{y^k}{|y|}\xi.\] Since
  $\nabla_1 \xi$ and $\nabla_2 \xi$ are not constant multiples of the
  vector field $\xi$, they are not elements of the one-dimensional
  curvature algebra 
  \begin{math}
    \mathfrak R_0=\{c \! \cdot\! \xi \ | \ c \in \mathbb R\}.
  \end{math}
  Let us introduce polar coordinates $y^1 = r\cos t$, $y^2 = r\sin t$ in
  the tangent space $T_0 \mathbb D^2$, then we can express the curvature
  vector field and its first covariant derivatives at $x\!=\!0$ by
  \begin{alignat*}{1}
    \xi \!=\! -\frac{1}{4} \frac{\partial}{\partial t}, \qquad \nabla_1
    \xi \!=\! -\frac{3}{8}\cos t \frac{\partial}{\partial t},\qquad
    \nabla_2 \xi &\!=-\! \frac{3}{8} \sin t\frac{\partial}{\partial t}.
  \end{alignat*}
  Hence the vector fields 
  \[\frac{\partial}{\partial t} = -4\xi,\quad 
  \cos t\frac{\partial}{\partial t} = -\frac83\nabla_1 \xi\quad \sin
  t\frac{\partial}{\partial t} = -\frac83\nabla_2 \xi\] are elements of
  the infinitesimal holonomy algebra $\mathfrak{hol}^{*}(0)$.
  \\[1ex]
  Similarly, the second covariant derivatives of the curvature
  vector field are
  \begin{equation}
    \label{eq:D_2}
    \nabla\!_1\nabla\!_1\xi = 
    -\tfrac{3}{16}(4\cos^2 t+1)\frac{\partial}{\partial t},  \quad
    \nabla\!_2\nabla\!_2\xi =   
    -\tfrac{3}{16}(4\sin^2t +1)\frac{\partial}{\partial t}
  \end{equation}
  and 
  \begin{displaymath}
    \nabla\!_1\nabla\!_2\xi =  \nabla\!_2\nabla\!_1\xi = 
    -\frac34 \cos t\sin t\frac{\partial}{\partial t}.
  \end{displaymath}
  Since $\nabla\!_1\nabla\!_1\xi - \nabla\!_2\nabla\!_2\xi$ and
  $\nabla\!_1\nabla\!_2\xi$ belong to $\mathfrak{hol}^{*}(0)$, we obtain
  that $\cos 2t\frac{\partial}{\partial t}$ and $\sin
  2t\frac{\partial}{\partial t}$ are also elements of
  $\mathfrak{hol}^{*}(0)$.
  \\[1ex]
  Let us suppose now that for $k \!\in\!\mathbb Z$ the vector fields
  $\cos kt\frac{\partial}{\partial t},\;\sin kt\frac{\partial}{\partial
    t}$ belong to $\mathfrak{hol}^{*}(0)$.  We compute
  \begin{displaymath}
    \Big[\cos t\frac{\partial}{\partial t},\cos kt\frac{\partial}{\partial
      t}\Big] - \Big[\sin t\frac{\partial}{\partial t},\sin
    kt\frac{\partial}{\partial t}\Big] = -
    (k-1)\sin(k+1)t\frac{\partial}{\partial t}
  \end{displaymath}
  and
  \begin{displaymath}
    \Big[\cos t\frac{\partial}{\partial t},\sin kt\frac{\partial}{\partial
      t}\Big] + \Big[\sin t\frac{\partial}{\partial t},\cos
    kt\frac{\partial}{\partial t}\Big] = -
    (k+1)\cos(k+1)t\frac{\partial}{\partial t},
  \end{displaymath}
  hence we obtain that $\cos(k+1)t\frac{\partial}{\partial t}$ and
  $\sin(k+1)t\frac{\partial}{\partial t}$ are also elements of
  $\mathfrak{hol}^{*}(0)$, which proves the assertion.
\end{proof}

\begin{theorem}
  The topological closure of the holonomy group of the Funk surface is
  the orientation preserving diffeomorphism group
  $\mathsf{Diff}^{\infty}_{+}(S^1)$.
\end{theorem}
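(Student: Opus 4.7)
The plan is to establish the two inclusions $\overline{\mathsf{Hol}(0)} \subset \mathsf{Diff}^{\infty}_{+}(S^1)$ and $\mathsf{Diff}^{\infty}_{+}(S^1) \subset \overline{\mathsf{Hol}(0)}$ separately.

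For the first inclusion, observe that every generator $\tau^{\I}_{c}$ of $\mathsf{Hol}(0)$ is smoothly isotopic to the identity on $S^1$: contracting the loop $c$ to the constant loop at $0$ through a smooth homotopy produces a continuous path from $\tau^{\I}_{c}$ to $\mathsf{Id}_{S^1}$ inside $\mathsf{Diff}^{\infty}(S^1)$. Hence $\mathsf{Hol}(0)$ lies in the identity component $\mathsf{Diff}^{\infty}_{+}(S^1)$, which is closed, so $\overline{\mathsf{Hol}(0)} \subset \mathsf{Diff}^{\infty}_{+}(S^1)$ as well.

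For the nontrivial inclusion, Theorem \ref{prop:funk} gives that the real Witt algebra $\mathcal{W}$ is contained in $\mathfrak{hol}^{*}(0)$, and Theorem \ref{thm:inf_hol_tan}(ii) then places the subgroup $\langle\exp(\mathcal{W})\rangle \subset \mathsf{Diff}^{\infty}(S^1)$ generated by $\exp(\mathcal{W})$ inside $\overline{\mathsf{Hol}(0)}$. It therefore suffices to show that $\langle\exp(\mathcal{W})\rangle$ is dense in $\mathsf{Diff}^{\infty}_{+}(S^1)$.

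The density argument combines three ingredients: (a) $\mathcal{W}$ is dense in ${\mathfrak X}^{\infty}(S^1)$ in the $C^\infty$-topology, since trigonometric polynomials approximate smooth vector fields on the circle together with all derivatives; (b) by the strong ILB and $F$-regular Lie group structure of $\mathsf{Diff}^{\infty}(S^1)$ (\cite{Omori2}), every element of the identity component $\mathsf{Diff}^{\infty}_{+}(S^1)$ arises as the evolution of a smooth time-dependent vector field, and any such evolution is a $C^\infty$-limit of Trotter-type products of exponentials $\exp(X_N\Delta t)\circ\cdots\circ\exp(X_1\Delta t)$; (c) the exponential map $\exp:{\mathfrak X}^{\infty}(S^1)\to\mathsf{Diff}^{\infty}_{+}(S^1)$ and the group multiplication are continuous in the $C^\infty$-topology. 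Given $\phi\in\mathsf{Diff}^{\infty}_{+}(S^1)$ and a neighbourhood $U$ of $\phi$, (b) supplies a finite product $\exp(X_N)\circ\cdots\circ\exp(X_1)\in U$ with $X_i\in{\mathfrak X}^{\infty}(S^1)$; by (a) one may replace each $X_i$ by some $Y_i\in\mathcal{W}$ arbitrarily close to it, and (c) guarantees that $\exp(Y_N)\circ\cdots\circ\exp(Y_1)$ remains in $U$. This element belongs to $\langle\exp(\mathcal{W})\rangle\subset\overline{\mathsf{Hol}(0)}$, yielding the required density and completing the proof.

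The main technical obstacle is ingredient (b): the generation of the identity component by evolutions of time-dependent vector fields and their Trotter-style approximation by finite products of exponentials is a structural property that relies on the full Omori regularity of $\mathsf{Diff}^{\infty}(S^1)$, not merely on the existence of the exponential mapping (which, in this setting, is known to fail to be locally surjective). Once this ingredient is invoked, the remaining continuity and density statements are routine.
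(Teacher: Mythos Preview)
Your argument is correct, but it reaches the reverse inclusion by a different route from the paper. After obtaining $\langle\exp(\mathcal W)\rangle\subset\overline{\mathsf{Hol}(0)}$ from Theorems \ref{prop:funk} and \ref{thm:inf_hol_tan}(ii), the paper does \emph{not} argue density via Trotter products. Instead it uses only the continuity of $\exp$ together with the density of $\mathcal W$ in ${\mathfrak X}^{\infty}(S^1)$ to conclude that $\exp\bigl({\mathfrak X}^{\infty}(S^1)\bigr)\subset\overline{\mathsf{Hol}(0)}$; the subgroup this set generates is automatically \emph{normal} in $\mathsf{Diff}^{\infty}_{+}(S^1)$ (since $g\,\exp(X)\,g^{-1}=\exp(g_*X)$), and then Herman's theorem that $\mathsf{Diff}^{\infty}_{+}(S^1)$ is simple forces this nontrivial normal subgroup to be the whole group. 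Your approach trades this single deep structural fact (simplicity) for the regularity machinery behind ingredient (b); the paper's route is shorter and sidesteps precisely the technical obstacle you flag, while your argument has the virtue of not invoking Herman's result and of being transportable to settings where simplicity is unavailable. For the first inclusion both arguments amount to the same observation, namely that simple connectedness of $\mathbb D^2$ places every parallel translation in the identity component.
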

\begin{proof} 
  Since the Funk surface is simply connected, the elements of the
  holonomy group are orientation preserving diffeomorphisms of the
  circle. Hence the topological closure of the holonomy group
  $\overline{\mathsf{Hol}(0)}$ is contained in
  $\mathsf{Diff}^{\infty}_{+}(S^1)$. From the other hand
  $\overline{\mathsf{Hol}(0)}$ contains the exponential image of the
  real Witt algebra. The exponential mapping is continuous (c.f.~Lemma
  4.1 in \cite{Omori2}, p.~79) and the real Witt algebra is dense in
  the Lie algebra of $\mathsf{Diff}^{\infty}(S^1)$, hence
  $\overline{\mathsf{Hol}(0)}$ contains the normal subgroup generated
  the exponential image of the Lie algebra of
  $\mathsf{Diff}^{\infty}(S^1)$. Since $\mathsf{Diff}^{\infty}_+(S^1)$
  is a simple group (cf. \cite{Herman}, Corollaire 2.) we get
  $\overline{\mathsf{Hol}(0)}=\mathsf{Diff}^{\infty}_{+}(S^1)$.
\end{proof}


\begin{thebibliography}{30}
\bibitem{Brou} L. E. J.~Brouwer, \emph{Die Theorie der endlichen
    kontinuierlichen Gruppen, unabh\"angig von den Axiomen von Lie,}
  Math.~Ann.~{\bf 67} (1909), 246--267.
\bibitem{ChSh} S. S. Chern, Z. Shen, \emph{Riemann-Finsler geometry,}
  Nankai Tracts in Mathematics 6, World Scientific, (2005).
\bibitem{Cr_Sa} M.~Crampin, D. J.~Saunders, \emph{Holonomy of a class
    of bundles with fibre metrics,} arXive:1005.5478v1.
\bibitem{FF} B. L. Feigin, D. B. Fuchs, \emph{Cohomologies of Lie
    groups and Lie algebras}, Part II in: Lie groups and Lie algebras,
  II, Encyclopaedia Math. Sci., 21, (A. L. Onishchik, E. B. Vinberg,
  eds.)  Springer, Berlin, (2000),
\bibitem{Herman} M. R. Herman, \emph{Sur le groupe des difféomorphismes du tore}, 
Ann. Inst. Fourier 23 (1973), 75--86.
\bibitem{Mu_Na} Z.~Muzsnay, P. T.~Nagy, \emph{Finsler manifolds with
  non-Riemannian holonomy,} Houston J. Math.  
\bibitem{Olver} P. J. Olver, \emph{Equivalence, invariants, and
    symmetry}, Cambridge University Press, Cambridge, (1995).
\bibitem{Omori1} H. Omori, \emph{Infinite dimensional Lie
    transformation groups}, Lecture Notes in Mathematics
  427. Springer-Verlag, Berlin-New York, (1974).
\bibitem{Omori2} H. Omori, \emph{Infinite-Dimensional Lie Groups},
  Translation of Mathematical Monographs 158, American Mathematical
  Society, Providence, Rhode Island, (1997).
\bibitem{Shen1} Z. Shen, \emph{Differential Geometry of Spray and
    Finsler Spaces,} Kluwer Academic Publishers, Dordrecht, (2001).
\bibitem{Shen2} Z. Shen, \emph{Two-dimensional Finsler metrics of 
constant flag curvature,} manuscripta math. ~{\bf 109} (2002), 349--366.
\bibitem{Shen3} Z. Shen, \emph{Projectively flat Randers metrics with 
constant flag curvature,}  Math. Ann.  325  (2003), 19--30.
\bibitem{Shen4} Z. Shen, \emph{Projectively flat Finsler metrics with
    constant flag curvature,} Trans. Amer. Math. Soc. 355 (2003),
  1713--1728.
\end{thebibliography}
\end{document}